\newtheorem{theorem}{Theorem}[section]
\newtheorem{lemma}[theorem]{Lemma}
\newtheorem{corollary}[theorem]{Corollary}
\newtheorem{fact}[theorem]{Fact}
\newtheorem{proposition}[theorem]{Proposition}
\newtheorem{theoremI}[theorem]{Theorem}
\theoremstyle{definition}
\newtheorem{example}[theorem]{Example}
\newtheorem{definition}[theorem]{Definition}
\newtheorem{assumption}[theorem]{Assumption}
\newtheorem{remark}[theorem]{Remark}
\newcommand{\D}{\mathcal{D}}
\newcommand{\C}{\mathcal{C}}
\newcommand{\N}{\mathbb{N}}
\newcommand{\Q}{\mathbb{Q}}
\newcommand{\m}{\mathfrak{m}}
\newcommand{\ideal}{\trianglelefteq}
\renewcommand{\epsilon}{\varepsilon}
\renewcommand{\delta}{\partial}
\DeclareMathOperator{\id}{id}
\DeclareMathOperator{\rk}{rank}
\DeclareMathOperator{\ord}{ord}
\title[A Basis Theorem for Rings with Commuting Operators]{A Basis Theorem for Rings with Commuting Operators in Characteristic Zero}
\author{Cas Burton}
\address{Cas Burton, Department of Mathematics, University of Manchester, Oxford Road, Manchester, United Kingdom M13 9PL}
\email{cas.burton@manchester.ac.uk}
\date{January 31, 2025}
\subjclass[2020]{16P70, 12H05, 12H10}
\keywords{Noetherianity, conservative systems, differential and difference algebra}
\thanks{This work was supported by the Additional Funding Programme for Mathematical Sciences, delivered by EPSRC (EP/V521917/1) and the Heilbronn Institute for Mathematical Research.}
\begin{document}
\begin{abstract}
Motivated by the differential basis theorem of Kolchin and the difference-differential basis theorem of Cohn, in this paper we present a basis theorem for polynomial rings equipped with \textit{commuting} generalised Hasse-Schmidt operators (in the sense of Moosa and Scanlon \cite{MSGen}). We recover Kolchin and Cohn's results as special cases of our main theorem.
\end{abstract}

\maketitle

\tableofcontents

\section{Introduction}

Basis theorems, or rather ascending chain conditions on systems of ideals, are an important tool in commutative algebra. In 1890, Hilbert proved that every ideal of a polynomial ring in finitely many variables over a field has a finite generating set. Commonly known as Hilbert's Basis Theorem, this generalises to polynomial rings over Noetherian rings. Recall that a ring is Noetherian if it has the ascending chain condition on ideals, or equivalently, if every ideal has a finite generating set. In the context of differential algebra, Ritt and Raudenbush asked the natural question: can this be adapted to differential ideals in a differential polynomial ring over a differential field?

We briefly recall some key notions from differential algebra. A differential ring \((R,\delta)\) is a ring equipped with an additive map \(\delta\) that satisfies the Leibniz rule; \(\delta(rs) = r\delta(s) + \delta(r)s\) for \(r,s \in R.\) We call \(\delta\) a derivation and say that an ideal is differential if it is closed under \(\delta.\) The differential polynomial ring over \((R,\delta)\) with indeterminate \(x\) is denoted \(R\{x\}_{\delta}\) and is constructed in the natural way - it is the usual polynomial ring over \(R\) with indeterminates \(x\) and its formal derivatives \(\delta(x),\delta^2(x),\ldots\) . The natural differential analogue of Hilbert's Basis Theorem does not hold: in the differential polynomial ring \(R\{x\}_{\delta},\) the differential ideal generated by \(\delta(x)\delta^2(x), \delta^2(x) \delta^3(x), \ldots , \delta^n(x)\delta^{n+1}(x),\ldots\) is not finitely generated as a differential ideal (for details, see page 12 of \cite{ritt1932differential}). However, by considering a more restrictive class of ideals, an analogue of the basis theorem does hold. In 1934, Raudenbush \cite{raudenbushdifferential} established a basis theorem for \textit{radical} differential ideals in differential polynomial rings over a differential field of characteristic zero with a single derivation. Kolchin \cite{kolchinfrench} extended this result in 1961 to differential polynomial rings over differential rings of characteristic zero with multiple commuting derivations, provided that the base ring has the ascending chain condition on radical differential ideals. Note that if we are considering a differential ring with multiple derivations, the derivations must commute for such a basis theorem to hold. If they do not, it is clear that the ideal generated by \(\delta_2(\delta_1(x)), \delta_2(\delta_1^2((x)),\ldots\) is not finitely generated as a radical differential ideal.

Another area where basis theorems have been explored is that of difference algebra. A difference ring \((R, \sigma_1,\ldots,\sigma_n)\) is a ring equipped with injective endomorphisms \(\sigma_i\colon R\to R.\) Again, we require that the endomorphisms commute. In \cite{rittdifference}, Ritt and Raudenbush establish a difference basis theorem for perfect difference ideals. A difference ideal is an ideal closed under each endomorphism \(\sigma_i.\) A perfect difference ideal \(I\) satisfies the following additional condition: The inclusion \(\tau_1(a^{t_1})\cdots\tau_m(a^{t_m}) \in I\) (where \(\tau_j\) is a composition of the \(\sigma_i\)'s and \(t_j \in \N\)) implies \(a \in I.\) If a difference ring \(R\) (of arbitrary characteristic) has the ascending chain condition on perfect difference ideals, then so does the difference polynomial ring over \(R\) in finitely many variables. The difference polynomial ring is naturally constructed as in the differential case.

In 1970, Cohn \cite{cohn_1970} combined the above two basis theorems in his difference-differential basis theorem. For a ring \(R\) with both a difference and differential structure, where all operators commute, Cohn established that if \(R\) 
 is of characteristic zero and has the ascending chain condition on perfect difference-differential ideals, then so does the difference-differential polynomial ring in finitely many variables over \(R.\)\footnote{We note that Cohn presents the result in arbitrary characteristic. However, in \autoref{char-issues} we discuss issues that arise in positive characteristic.}

In \cite{MSGen}, Moosa and Scanlon generalised the notions of difference rings, differential rings and difference-differential rings with the introduction of rings with generalised Hasse-Schmidt operators (also called \(\D\)-rings). Thus it is natural to ask whether an analogue of the basis theorem holds in the wider context of \(\D\)-rings. Let \(K\) be a field and \(\D\) be a finite-dimensional \(K\)-algebra. A \(\D\)-ring \((R,e)\) is a \(K\)-algebra \(R\) equipped with a \(K\)-algebra homomorphism \(e \colon R \to R \otimes \D.\) Given a basis of \(\D,\) \(\epsilon_0,\ldots,\epsilon_m,\) we can write \(e\) coordinate-wise as \(\delta_0,\delta_1,\ldots,\delta_m;\) namely, \[e(-) =  \delta_0 (-) \otimes \epsilon_0 + \cdots + \delta_m(-) \otimes \epsilon_m.\] We investigate a subclass of these \(\D\)-rings, which we call \(\D^*\)-rings. These will be \(\D\)-rings where \(\delta_0,\ldots,\delta_m\) commute with each other. We will also assume that the so-called associated endomorphisms are injective. Note that this differs slightly from the assumptions and structures given in \cite{mtfo0char}, as Moosa and Scanlon instead require that at least one of the associated endomorphisms is the identity map of \(R\) (we make no such assumption). We will look at perfect \(\D\)-ideals - these are ideals closed under all of the operators \(\delta_0, \ldots,\delta_m\) such that the inclusion \(\tau_1(a^{t_1})\cdots\tau_m(a^{t_m}) \in I\) (where \(\tau_j\) is a composition of the associated endomorphisms and \(t_j \in \N\)) implies \(a \in I.\) 
The \(\D^*\)-polynomial ring with indeterminate \(x\) is denoted by \(R\{x\}_{\D^*}\) and is constructed analogously to the differential polynomimal ring. It is the usual polynomial ring with indeterminates \(x\) and its formal images under the operators \(\delta_0,\ldots,\delta_m\) and (some of) their compositions. (See \autoref{d-poly-rings} for more details.) Our main result, and the final result of this paper, is the following:

\begin{theoremI}[\(\D\)-Basis Theorem]
    Let \(R\) be a \(\Q\)-algebra and \((R,e)\) be a \(\D^*\)-ring. If \(R\) has the ascending chain condition on perfect \(\D\)-ideals, so does \(R\{x\}_{\D^*}.\)
\end{theoremI}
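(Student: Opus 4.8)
The plan is to follow the classical Ritt-Raudenbush strategy, adapting it to the $\D^*$-setting. The backbone of every such basis theorem is the observation that the ascending chain condition on perfect ideals is equivalent to two more tractable properties: first, that every perfect $\D$-ideal is the \emph{radical} (in the perfect sense) of a finitely generated $\D$-ideal, i.e.\ is finitely generated as a perfect $\D$-ideal; and second, that every perfect $\D$-ideal is a finite intersection of prime perfect $\D$-ideals, so that there are no infinite strictly ascending chains. I would first establish the relevant closure and decomposition machinery for perfect $\D$-ideals over a $\D^*$-ring — that the perfect closure of a set is well-defined, that perfect $\D$-ideals satisfy a primary-style decomposition into prime components, and that the ascending chain condition is equivalent to the statement that every perfect $\D$-ideal is finitely generated as a perfect $\D$-ideal. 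This reduces the theorem to showing that in $R\{x\}_{\D^*}$ every perfect $\D$-ideal is finitely generated as such, given that this holds in $R$.

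The heart of the argument is a \emph{Ritt-Raudenbush basis argument} run by contradiction. Suppose the set of perfect $\D$-ideals of $R\{x\}_{\D^*}$ that are \emph{not} finitely generated as perfect $\D$-ideals is nonempty. Using the ascending chain condition on $R$ together with a Zorn's-lemma/maximality argument (a perfect $\D$-ideal maximal among non-finitely-generated ones exists precisely because the obstruction to the chain condition can be pushed to a maximal counterexample), I would produce a perfect $\D$-ideal $I$ maximal with respect to being non-finitely-generated, and then show $I$ must be prime. Primeness follows from the standard trick: if $ab \in I$ but $a,b \notin I$, then the perfect $\D$-ideals $I + \{a\}$ and $I + \{b\}$ (perfect closures) strictly contain $I$, are therefore finitely generated, and one manipulates the product of their generators using the defining perfect-ideal property $\tau_1(c^{t_1})\cdots\tau_m(c^{t_m}) \in I \Rightarrow c \in I$ to deduce that $I$ itself is finitely generated, a contradiction. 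The key new ingredient here, compared to the purely differential case, is handling the monomials $\tau_j(c^{t_j})$ built from the associated endomorphisms; the injectivity assumption on these endomorphisms is exactly what lets the perfect-ideal property be exploited cleanly.

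The technical crux — and what I expect to be the main obstacle — is the analogue of the classical lemma that a single $\D$-polynomial, once we pass modulo a prime perfect $\D$-ideal, can be controlled by its \emph{leader} and its behaviour under reduction. Concretely, one needs a good notion of ranking on the $\D$-monomials $\tau(x)$ (the indeterminate and its images under the operators and their compositions), a pseudo-division / reduction algorithm compatible with the commuting operators $\delta_0,\ldots,\delta_m$, and a proof that reduction terminates. The commutativity of the $\delta_i$ is what makes the set of derivative-monomials into a well-orderable structure under a suitable ranking, mirroring how commuting derivations give a well-ordering in Kolchin's setting; without commutativity the ranking breaks down, exactly paralleling the failure noted in the introduction. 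I would need to verify that the Leibniz-type and endomorphism-type behaviour of the coordinate operators $\delta_i$ on products interacts correctly with reduction, so that a reduced polynomial lying in a prime perfect $\D$-ideal forces a lower-rank witness into the ideal. Assembling the ranking, the reduction lemma, and the maximal-counterexample argument then closes the proof; I expect the bookkeeping around the associated endomorphisms and the characteristic-zero hypothesis (needed, as in Kolchin and Cohn, to divide by the integer coefficients that appear when differentiating powers) to be where the real care is required.
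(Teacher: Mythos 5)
Your plan follows essentially the same route as the paper: the paper packages your maximal-counterexample-is-prime argument into a cited fact about perfect conservative systems (it verifies that the perfect $\D$-ideals form a perfect conservative system --- the nontrivial point being divisibility, i.e.\ that $I:s$ is again a perfect $\D$-ideal, proved via the product rule for a ranked basis --- and then invokes the criterion that such a system is Noetherian provided every prime member $P$ has the form $\{\Sigma\}_{\D} : s$ with $\Sigma$ finite and $s \notin P$). The remaining work is exactly what you predict: a ranking on the variables $d^{\theta}x$, a reduction lemma producing $H \cdot g \equiv g_0 \bmod [A]_{\D}$ with $H$ a product of $\sigma$-transforms of initials and separants, and characteristic sets of prime perfect $\D$-ideals; the characteristic-zero hypothesis enters precisely where you say, in dividing by the integer coefficients of the separant. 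One remark: the decomposition of perfect $\D$-ideals into finitely many primes is a consequence of the theorem, not a prerequisite, and is not needed anywhere in the argument.

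There is, however, one concrete gap. You locate the use of the hypothesis that $R$ has the ascending chain condition on perfect $\D$-ideals in the Zorn/maximal-counterexample step, but that step needs only Zorn's lemma (the union of a chain of non-finitely-generated perfect $\D$-ideals is again such). The hypothesis on $R$ is actually needed --- and your sketch never addresses this --- to handle the contraction $P \cap R$. After reducing a given $f \in P$ against the characteristic set $\C$, the reduced remainder does not vanish in general: it lands in the ideal generated by $P \cap R$, so one obtains $P = \{\C \cup (P\cap R)\}_{\D} : H$ rather than $P = \{\C\}_{\D} : H$. To produce a \emph{finite} $\Sigma$ one must replace $P \cap R$ by a finite set $\Phi$ with $\{\Phi\}_{\D} = P \cap R$, and that is exactly what the ascending chain condition on perfect $\D$-ideals of $R$ provides. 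Without this step your argument only proves the theorem when $R$ is a field, where $P \cap R = (0)$. Everything else in your outline is sound.
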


The paper is organised as follows. In \autoref{section-prelims}, we recall key notions related to \(\D\)-rings. In particular, we present a basis-free description of \(\D\)-rings with commuting operators, which we refer to as \(\D^*\)-rings and introduce the \(\D^*\)-polynomial ring over a \(\D^*\)-ring. We also give a brief introduction to conservative systems. In \autoref{section-reduction}, we introduce the notion of a ranked basis and a ranking on the \(\D^*\)-polynomial ring. We prove various technical lemmas, culminating with an important \(\D^*\)-reduction lemma (see \autoref{div_alg}). In \autoref{section-d-basis-theorem}, we discuss the notion of a characteristic set of \(\D\)-ideals of the \(\D^*\)-polynomial ring and then combine the results of previous sections to prove our main result; namely, the \(\D^*\)-basis theorem (see \autoref{d-basis-thm}).

\section{Preliminaries}\label{section-prelims}

Throughout this paper, we fix a field \(K\) and a finite-dimensional \(K\)-algebra \(\D\) (unless explicitly stated, the characteristic of \(K\) remains arbitrary).  In this section, we recall some basic notions of \(\D\)-rings. In particular, in \autoref{d-rings-comm}, we introduce the concept of \(\D\)-rings with commuting operators which we refer to as \(\D^*\)-rings. In \autoref{d-poly-rings}, we construct the \(\D^*\)-polynomial ring over a \(\D^*\)-ring \((R,e).\) Namely, the universal object in the category of \(\D^*\)-algebras over \((R,e).\) In \autoref{cons-sys}, we conduct a brief review of the theory of conservative systems and illustrate their role in proving the \(\D^*\)-basis theorem.

\subsection{\(\D\)-rings and \(\D^*\)-rings}\label{d-rings-comm}

Recall from \cite{MSGen} that a \(\D\)-ring is a pair \((R,e)\) consisting of a \(K\)-algebra \(R\) equipped with a \(K\)-algebra homomorphism \[e \colon R \to \D(R) := R \otimes_{K} \D.\]

\begin{remark}
    In \cite{mtfo0char}, \(\D\) is further equipped with a \(K\)-algebra homomorphism \(\pi \colon \D \to K\) and in the definition of a \(\D\)-ring it is required that \(e\) is a section of \(\pi.\) In this paper, we relax this extra condition.
\end{remark}

Given a linear \(K\)-basis \(\bar{\epsilon} = \{\epsilon_0, \epsilon_1, \ldots, \epsilon_n\}\) of \(\D ,\) \(e\) can be written as \[e(r) = \delta_0(r) \otimes \epsilon_0 + \delta_1(r) \otimes \epsilon_1 + \cdots + \delta_n (r) \otimes \epsilon_n\] where \(\delta_i \colon R \to R\) are \(K\)-linear operators. Note that \(\{1\otimes \epsilon_i \mid \epsilon_i \in \bar{\epsilon}\}\) is an \(R\)-basis of \(\D(R).\) Associating \(1 \otimes \epsilon_i\) with \(\epsilon_i,\) we will write \(``r \epsilon"\) for \(``r \otimes \epsilon".\) We say that \(\{\delta_0,\delta_1,\ldots,\delta_n\}\) are the coordinate maps of \(e\) with respect to \(\{\epsilon_0,\epsilon_1,\ldots,\epsilon_n\}.\)
As \(e\) is an algebra homomorphism,  the operators \(\delta_i\) satisfy a suitable product rule; namely, if the basis elements are related by \[\epsilon_j \epsilon_k = \sum_{i=0}^n \alpha_i^{j,k} \epsilon_i, \quad \text{with } \alpha_i^{j,k} \in K,\] then the following product rules holds for each \(i\) and \(r,s \in R:\)
\begin{equation}\label{product}
    \delta_i(rs) = \sum_{j,k=0}^n \alpha_i^{j,k} \delta_j(r)\delta_k(s).
\end{equation}

\begin{example}\label{Degs}
    \begin{enumerate}[label=(\alph*)]
        \item Let \(\D = K[\epsilon]/(\epsilon)^2\) with the usual \(K\)-algebra structure. Denote by \(\sigma = \delta_0\) and \(\delta = \delta_1\) the operators associated to the basis \(\{1,\epsilon\}.\) Then \((R,e)\) is a \(\D\)-ring if and only if \(\sigma\) is a \(K\)-endomorphism and \(\delta\) is a \(K\)-linear derivation on \(R\) twisted by \(\sigma.\) Namely, for all \(r,s \in R,\) \(\delta(rs) = \sigma(r)\delta(s) + \delta(r)\sigma(s).\) In the case that \(\sigma = \id_R,\) \(\delta\) is a derivation in the usual sense. Note that \(\sigma\) and \(\delta\) do not necessarily commute.
        \item Let \(\D = K^m\) with the product \(K\)-algebra structure. Denote by \(\sigma_1 = \delta_1, \ldots, \sigma_m = \delta_m\) the operators associated to the standard basis \(\{\epsilon_1, \ldots, \epsilon_m\}\) (i.e. \(\epsilon_i = (0,\ldots, 1,\ldots,0)\) with \(1\) in the \(i^{th}\) position). Then \((R,e)\) is a \(\D\)-ring if and only if \(\sigma_1, \ldots, \sigma_m\) are \(K\)-endomorphisms. In this case, \(R\) is a difference ring over \(K\) where the \(\sigma_i\)'s do not necessarily commute.
        \item Let \(\D = K[\nu_1,\ldots,\nu_n]/(\nu_1,\ldots,\nu_n)^2 \times K^m\) with the natural \(K\)-algebra structure. Denote by \(\sigma_0 = \delta_0,\) \(\delta_1,\ldots,\delta_n\) and \(\sigma_j = \delta_{n+j}\) the operators associated to the basis \(\{\epsilon_0,\epsilon_1,\ldots,\epsilon_n,\epsilon_{n+1},\ldots,\epsilon_{n+m}\}\) where \(\epsilon_0 = (1,0,\ldots,0),\) \(\epsilon_i = (\nu_i, 0, \ldots, 0)\) for \(1 \le i \le n,\) and \(\epsilon_{n+j} = (0,\ldots,1,\ldots,0)\) where the \(1\) occurs in the \((j+1)^{th}\) position. Then \((R,e)\) is a \(\D\)-ring if and only if \(\delta_1,\ldots,\delta_n\) are \(K\)-linear derivations twisted by \(\sigma_0\) and each \(\sigma_j\) is a \(R\)-endomorphism. If \(\sigma_0 = \id_R,\) then \(R\) is a difference-differential ring as seen in \cite{cohn_1970} but without the restriction that the operators commute.
        \item Let \(\D = K[\epsilon]/(\epsilon)^{n+1}\) with the usual \(K\)-algebra structure. Denote by \(\delta_0, \delta_1,\ldots,\delta_n\) the operators associated to the basis \(\{1,\epsilon,\epsilon^2,\ldots,\epsilon^n\}.\) Then \((R,e)\) is a \(\D\)-ring if and only if \(\delta_0\) is a \(R\)-endomorphism and for all \(r,s \in R,\) \(\delta_i(rs) = \sum_{j+k=i}\delta_j(r)\delta_k(s).\) In the case where \(\delta_0 = \id_R,\) \((\delta_1,\ldots,\delta_n)\) is a truncated Hasse-Schimdt derivation. Again, the \(\delta_i\)'s do not necessarily commute.
    \end{enumerate}
\end{example}

For further examples and standard constructions of \(\D\)-rings, see Section 3 of \cite{mtfo0char}. We now recall the basic notion of \(\D\)-ideals and \(\D\)-homomorphisms. We follow the presentation of \cite{mohamed2022weil}.

\begin{definition}[\(\D\)-ideal]
Let \((R,e)\) be a \(\D\)-ring. An ideal \(I \ideal R\) is said to be a \(\D\)-ideal if \(e(I) \subseteq I \otimes_{K} \D \ideal \D(R).\) Equivalently, for any basis \(\bar{\epsilon}\) of \(\D,\) if \(\{\delta_0, \ldots, \delta_n\}\) are the coordinate maps of \(e\) with respect to \(\bar{\epsilon},\) we have \(\delta_i(I) \subseteq I\) for each \(0 \le i \le n.\)
\end{definition}

Let \(S \subseteq R.\) We denote by \([S ]_{\D}\) the smallest \(\D\)-ideal of \(R\) containing \(S,\) and we call it the \(\D\)-ideal of \(R\) generated by \(S.\) This exists because intersections of \(\D\)-ideals are \(\D\)-ideals. One can readily check that \([ S]_{\D}\) can be described as the ideal generated by \(\{\delta_{i_1}\ldots\delta_{i_s}a \mid a \in S, s \in \N, 0 \leq i_j \leq m\},\) where \(\delta_0,\ldots,\delta_m\) are the operators associated to a basis \(\bar{\epsilon}.\)

We note that, as in \cite{mohamed2022weil}, \(\D\) can be regarded as a functor on the category of \(K\)-algebras and \(K\)-algebra homomorphisms. Namely, for any \(K\)-algebra homomorphism \(\phi \colon R \to S,\) we set \(\D(\phi) \colon \D (R) \to \D(S)\) to be \(\phi \otimes \id_{\D}.\) 

\begin{definition}
    Let \((R,e)\) and \((S,f)\) be \(\D\)-rings. We say \(\phi \colon (R,e) \to (S,f)\) is a \(\D\)-homomorphism if it is both a \(K\)-algebra homomorphism and the following diagram commutes:
\[
\begin{tikzcd}[row sep=25]
& \D (R) \arrow[r, "\D (\phi)"] & \D (S) \\
& R \arrow[u, "e"] \arrow[r, "\phi"] & S \arrow[u, "f"] \\
\end{tikzcd}
\] that is; for all \(r \in R,\) we have \((\phi \otimes \id_{\D})(e(r)) = f(\phi(r)).\) Equivalently, for any basis \(\bar{\epsilon}\) of \(\D,\) \(\phi \delta_i^R = \delta_i^S \phi.\) If \(S\) is an \(R\)-algebra, we call \((S,f)\) an \((R,e)\)-algebra if the structure map \(R \to S\) is a \(\D\)-homomorphism. We may also say that \((S,f)\) is a \(\D\)-algebra over \((R,e).\) If \((S,f)\) and \((T,g)\) are both \((R,e)\)-algebras and \(\phi \colon S \to T\) is a map between them, then we say that \(\phi\) is a \((R,e)\)-algebra homomorphism (or a \(\D\)-algebra homomorphism over \((R,e)\)) if it is an \(R\)-algebra homomorphism and a \(\D\)-homomorphism.
\end{definition}

So far, we have made no assumptions on the operators \(\delta_i,\) other than being additive and satisfying the product rule (\ref{product}). In the terminology of \cite{mtfo0char}, one may say that they are ``free". We will now restrict to a subclass of \(\D\)-rings; namely, those where the operators commute.

\begin{definition}[\(\D^*\)-ring]

Let \((R,e)\) be a \(\D\)-ring. We say that \((R,e)\) is a \(\D^*\)-ring, or a \(\D\)-ring with commuting operators, if the following diagram commutes
\begin{equation}\label{comm-diag}
\begin{tikzcd}[row sep=25]
& \D (R) \arrow[r, "\D (e)"] & \D(\D (R)) \arrow[dd, "\id_R \otimes \beta" inner sep=0.7mm]\\
R \arrow[ur, "e"] \arrow[dr, "e"'] && \\
& \D (R) \arrow[r, "\D(e)"'] & \D(\D(R)) \\
\end{tikzcd}
\end{equation}

where \(\beta \colon \D \otimes \D \to \D \otimes \D\) is the canonical isomorphism \(\beta(x \otimes y) = y \otimes x.\) Recall that \(\D(e) = e \otimes \id_{\D}.\)
\end{definition}

We justify our use of the terminology ``commuting operators" with the following lemma.

\begin{lemma}
Let \((R,e)\) be a \(\D\)-ring. Then \((R,e)\) is a \(\D^*\)-ring if and only if for any basis \(\bar{\epsilon}\) of \(\D\) (equivalently, there exists a basis \(\bar{\epsilon}\) of \(\D\) such that) the coordinate maps \(\{\delta_0, \delta_1, \ldots, \delta_m\}\) of \(e\) commute pairwise.
\end{lemma}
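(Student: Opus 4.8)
The plan is to reduce the basis-free commutativity of the diagram (\ref{comm-diag}) to a coefficient computation in a fixed basis, and then read off pairwise commutativity of the coordinate maps. First I would fix a basis $\bar\epsilon = \{\epsilon_0, \ldots, \epsilon_m\}$ of $\D$ and use associativity of the tensor product to identify $\D(\D(R)) = (R \otimes_K \D) \otimes_K \D$ with $R \otimes_K \D \otimes_K \D$, for which $\{1 \otimes \epsilon_j \otimes \epsilon_i\}_{0 \le i,j \le m}$ is an $R$-basis. Under this identification the swap $\id_R \otimes \beta$ acts on the two $\D$-factors by $r \otimes \epsilon_j \otimes \epsilon_i \mapsto r \otimes \epsilon_i \otimes \epsilon_j$. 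I would also note at the outset that, since the diagram condition makes no reference to a basis, it suffices to prove the single equivalence ``the diagram commutes if and only if the coordinate maps of $e$ with respect to $\bar\epsilon$ commute pairwise'' for an arbitrary fixed $\bar\epsilon$: applying this to every basis yields the ``for any basis'' form, and the ``there exists a basis'' form follows a fortiori, closing the triangle of equivalences.

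The heart of the argument is to compute the two composites $R \to \D(\D(R))$ appearing in (\ref{comm-diag}) on an element $r \in R$. Writing $e(r) = \sum_{i=0}^m \delta_i(r) \otimes \epsilon_i$ and recalling $\D(e) = e \otimes \id_\D$, I would compute
\[
\D(e)\bigl(e(r)\bigr) = \sum_{i=0}^m e(\delta_i(r)) \otimes \epsilon_i = \sum_{i,j=0}^m \delta_j(\delta_i(r)) \otimes \epsilon_j \otimes \epsilon_i,
\]
which is the lower path of the diagram. Applying $\id_R \otimes \beta$ to swap the two $\D$-factors gives the upper path:
\[
(\id_R \otimes \beta)\,\D(e)\bigl(e(r)\bigr) = \sum_{i,j=0}^m \delta_j(\delta_i(r)) \otimes \epsilon_i \otimes \epsilon_j.
\]

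The diagram commutes precisely when these two expressions agree for every $r$. Relabelling $i \leftrightarrow j$ in the lower-path expression rewrites it as $\sum_{i,j} \delta_i(\delta_j(r)) \otimes \epsilon_i \otimes \epsilon_j$, so equality with the upper path reads
\[
\sum_{i,j=0}^m \delta_j(\delta_i(r)) \otimes \epsilon_i \otimes \epsilon_j = \sum_{i,j=0}^m \delta_i(\delta_j(r)) \otimes \epsilon_i \otimes \epsilon_j.
\]
Since $\{1 \otimes \epsilon_i \otimes \epsilon_j\}$ is an $R$-basis of $R \otimes_K \D \otimes_K \D$, I can equate coefficients to conclude that the diagram commutes if and only if $\delta_j(\delta_i(r)) = \delta_i(\delta_j(r))$ for all $i, j$ and all $r \in R$, i.e. the $\delta_i$ commute pairwise. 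This establishes the fixed-basis equivalence and hence the lemma.

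I do not expect a serious obstacle here; the computation only uses $K$-linearity of $e$ and the coordinate expansion, not the product rule (\ref{product}), since commutativity is an identity between additive operators and need only be tested on elements of $R$. The one point demanding care is the bookkeeping of the tensor factors: one must track the associativity isomorphism $(R \otimes \D) \otimes \D \cong R \otimes (\D \otimes \D)$ correctly and apply $\beta$ to the intended pair of $\D$-factors (the two copies produced by the successive applications of $\D(e)$ and $e$), so that the index swap lands on $\epsilon_i \otimes \epsilon_j$ rather than interacting with the $R$-factor.
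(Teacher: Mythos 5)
Your proposal is correct and follows essentially the same route as the paper: expand $\D(e)(e(r))$ in the fixed basis, apply the swap $\id_R\otimes\beta$, and compare coefficients against the $R$-basis $\{\epsilon_i\otimes\epsilon_j\}$ of $\D(\D(R))$ to read off $\delta_i\delta_j=\delta_j\delta_i$. Your extra remarks on the associativity bookkeeping and on why a single fixed-basis equivalence yields both the ``for any'' and ``there exists'' forms are sound but not a different argument.
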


\begin{proof}
Let \(r \in R.\) Then with respect to a basis \(\bar{\epsilon},\) we have \(e(r) = \sum_{i=0}^m \delta_i(r) \otimes \epsilon_i.\) Thus we have:

\[\D(e)(e(r)) = \sum_{i,j=0}^m \delta_j(\delta_i(r)) \otimes \epsilon_j \otimes \epsilon_i.\]

The diagram (\ref{comm-diag}) commutes if and only if \(\D(e)(e(r)) = (\id_R \otimes \beta)(\D(e)(e(r))).\) This holds if and only if

\[\sum_{i,j=0}^m \delta_j(\delta_i(r)) \otimes \epsilon_j \otimes \epsilon_i = (\id_R \otimes \beta) \left (\sum_{i,j=0}^m \delta_j(\delta_i(r)) \otimes \epsilon_j \otimes \epsilon_i\right ) = \sum_{i,j=0}^m \delta_j(\delta_i(r)) \otimes \epsilon_i \otimes \epsilon_j.\]

Note that \(\{\epsilon_i \otimes \epsilon_j \mid 0 \leq i,j \leq m\}\) forms an \(R\)-basis of \(\D(\D(R)).\) By examining the coefficients of the tensor \(\epsilon_i \otimes \epsilon_j\) on both sides, we see that \(\delta_j(\delta_i(r)) = \delta_i(\delta_j(r));\) i.e. \(\delta_i\) and \(\delta_j\) commute.
\end{proof}

For examples of \(\D^*\)-rings, we may take any of the examples given in \autoref{Degs} with the additional restriction that the operators commute. In particular, from \autoref{Degs}(d), we obtain commuting truncated Hasse-Schmidt derivations (though these Hasse-Schmidt derivations are not necessarily iterative). We also recover differential rings as studied in \cite{kolchin73} from \autoref{Degs}(a), and from \autoref{Degs}(c) we recover difference-differential rings as seen in \cite{cohn_1970}.

\subsection{\(\D^*\)-polynomial rings}\label{d-poly-rings}

Let \((R,e)\) be a \(\D^*\)-ring (i.e. \((R,e)\) is a \(\D\)-ring with commuting operators). We now define the \(\D^*\)-polynomial ring over \((R,e).\) 

\begin{definition}[\(\D^*\)-polynomial ring]
Fix a basis of \(\D,\) \(\bar{\epsilon} = \{\epsilon_0,\epsilon_1,\ldots,\epsilon_m\},\) and let \(\{\delta_0,\delta_1,\ldots,\delta_m\}\) be the coordinate maps of \(e\) with respect to \(\bar{\epsilon}.\) Let \(I\) be any set and \(\bar{x} : = \{x_i \mid i \in I\}\) be a family of indeterminates. Let \[\bar{x}_{\D^*} = \{d^{\theta}x_i \mid i \in I, \theta \in \N^{m+1}\}\] be a new collection of (algebraically independent) indeterminates. We identify \(x_i\) with \(d^0x_i\) where \(0\) is the zero tuple in \(\N^{m+1}.\) The \(\D^*\)-polynomial ring over \((R,e)\) in indeterminates \(\bar{x}\) with respect to \(\bar{\epsilon}\) is the ring \[R\{\bar{x}\}_{\D^*}^{\bar{\epsilon}}:= R[\bar{x}_{\D^*}],\] equipped with the unique \(K\)-algebra homomorphism \(e' \colon R\{\bar{x}\}_{\D^*}^{\bar{\epsilon}} \to \D(R\{\bar{x}\}_{\D^*}^{\bar{\epsilon}})\) extending \(e \colon R \to \D(R)\) and satisfying
\[
d^{\theta}x_i\mapsto d^{\theta + 1_0}x_i \otimes \epsilon_0 + d^{\theta + 1_1}x_i \otimes \epsilon_1 + \cdots + d^{\theta + 1_m}x_i \otimes \epsilon_m
\]
where \(1_i \in \N^{m+1}\) contains a \(1\) in the \(i^{th}\) position (indexed from 0) as the only non-zero entry. When the context is clear, we will denote the \(\D\)-structure on \(R\{x\}_{\D^*}^{\bar{\epsilon}}\) by \(e\) (rather than \(e').\) As the generators are algebraically independent, it is clear that the operators commute. As such, \((R\{\bar{x}\}_{\D^*}^{\bar{\epsilon}},e)\) is a \(\D^*\)-algebra over \((R,e).\)
\end{definition}

We now observe that \(\D^*\)-polynomial rings are universal objects in the category of \(\D^*\)-rings. 

\begin{lemma}\label{univ-prop}
    Let \((R,e)\) be a \(\D^*\)-ring and \(\bar{\epsilon}\) a basis of \(\D.\) Suppose that \((S,f)\) is a \(\D^*\)-algebra over \((R,e)\) that is generated as a \(\D^*\)-ring by the tuple \(\bar{a}=(a_i)_{i \in I}\) over \((R,e).\) Let \(\bar{x} = (x_i)_{i \in I}\) be a tuple of indeterminates. Then there exists a unique, surjective \((R,e)\)-algebra homomorphism \(\phi \colon R \{\bar{x}\}_{\D^*}^{\bar{\epsilon}} \to S\) that maps \(x_i \mapsto a_i\) for each \(i \in I.\)
\end{lemma}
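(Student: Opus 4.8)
The plan is to use the fact that \(R\{\bar{x}\}_{\D^*}^{\bar{\epsilon}} = R[\bar{x}_{\D^*}]\) is an ordinary polynomial ring over \(R\) in the algebraically independent indeterminates \(d^{\theta}x_i\), so that an \(R\)-algebra homomorphism out of it is freely determined by its values on these indeterminates, and then to check that the forced values do indeed assemble into a \(\D\)-homomorphism. First I would establish uniqueness by observing that the \(\D\)-homomorphism condition dictates the values of \(\phi\) on all generators. Indeed, recalling that \(d^{\theta}x_i = \delta_0^{\theta_0}\cdots\delta_m^{\theta_m}(x_i)\) inside \(R\{\bar{x}\}_{\D^*}^{\bar{\epsilon}}\), any \((R,e)\)-algebra homomorphism \(\phi\) with \(\phi(x_i)=a_i\) must satisfy
\[
\phi(d^{\theta}x_i) = \phi\bigl(\delta_0^{\theta_0}\cdots\delta_m^{\theta_m}(x_i)\bigr) = \delta_0^{\theta_0}\cdots\delta_m^{\theta_m}\bigl(\phi(x_i)\bigr) = \delta_0^{\theta_0}\cdots\delta_m^{\theta_m}(a_i),
\]
where the middle equality uses that \(\phi\) intertwines each \(\delta_j\). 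Since \(\phi\) is also an \(R\)-algebra homomorphism, these values determine it on all of \(R\cup\bar{x}_{\D^*}\), hence everywhere.

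For existence, I would promote this formula to a definition: by the universal property of the polynomial ring, let \(\phi\) be the unique \(R\)-algebra homomorphism with \(\phi(d^{\theta}x_i) = \delta_0^{\theta_0}\cdots\delta_m^{\theta_m}(a_i) \in S\). The remaining task is to verify that \(\phi\) is a \(\D\)-homomorphism, i.e. that \(\phi\circ\delta_j = \delta_j\circ\phi\) for each \(j\). I would do this by considering the set \(A = \{P \in R\{\bar{x}\}_{\D^*}^{\bar{\epsilon}} \mid \phi(\delta_j P) = \delta_j(\phi P)\text{ for all }j\}\) and proving \(A\) is the whole ring. The set \(A\) is an additive subgroup since both \(\phi\circ\delta_j\) and \(\delta_j\circ\phi\) are additive. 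It contains \(R\), because \(\phi|_R\) is the structure map, which is a \(\D\)-homomorphism by the hypothesis that \((S,f)\) is a \(\D^*\)-algebra over \((R,e)\); and it contains each generator \(d^{\theta}x_i\), because \(\phi(\delta_j d^{\theta}x_i)\) and \(\delta_j(\phi(d^{\theta}x_i))\) are both equal to \(\delta_0^{\theta_0}\cdots\delta_j^{\theta_j+1}\cdots\delta_m^{\theta_m}(a_i)\), using that the operators commute on the \(\D^*\)-ring \(S\).

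The closure of \(A\) under multiplication is where the \(\D\)-structures of source and target must be coordinated: since \(\delta_j^{R\{x\}}\) and \(\delta_j^{S}\) obey the same product rule (\ref{product}) with the same structure constants \(\alpha_j^{k,l}\), and \(\phi\) is multiplicative, expanding \(\delta_j(PQ)\) via (\ref{product}), applying \(\phi\), and using \(P,Q\in A\) reproduces exactly \(\delta_j\bigl(\phi(P)\phi(Q)\bigr)=\delta_j(\phi(PQ))\). Thus \(A\) is a subring containing all ring generators of \(R\{\bar{x}\}_{\D^*}^{\bar{\epsilon}}\), whence \(A = R\{\bar{x}\}_{\D^*}^{\bar{\epsilon}}\) and \(\phi\) is a \(\D\)-homomorphism. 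Surjectivity then follows from the generation hypothesis: the image of \(\phi\) is an \(R\)-subalgebra of \(S\) containing every \(\delta_0^{\theta_0}\cdots\delta_m^{\theta_m}(a_i)\), and commutativity lets us rewrite an arbitrary composition of operators applied to \(a_i\) in this normal form, so the image contains all \(\D^*\)-generators of \(S\) over \((R,e)\) and hence equals \(S\).

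I expect the main obstacle to be precisely the verification that \(\phi\) respects the \(\D\)-structure, and within that the closure of \(A\) under products, where the two product rules on source and target must be matched term by term. The conceptual crux throughout is the \(\D^*\) hypothesis: commutativity of the operators is what makes \(d^{\theta}\) a well-defined normal form, what lets the generators lie in \(A\), and what reduces arbitrary operator words to the standard form needed for surjectivity.
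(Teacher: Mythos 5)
Your proposal is correct and follows essentially the same route as the paper: define \(\phi\) on the generators \(d^{\theta}x_i\) by the normal form \(\delta_0^{\theta_0}\cdots\delta_m^{\theta_m}(a_i)\), use the commutativity of the operators on \(S\) to verify the intertwining condition on generators, and deduce surjectivity from the generation hypothesis. The only difference is presentational: the paper verifies the tensor-level identity \(\D(\phi)\circ e = f\circ\phi\) on generators and leaves the extension to the whole ring implicit (both sides being algebra homomorphisms), whereas you make that extension explicit coordinate-wise via the subring \(A\) and the shared product rule.
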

\begin{proof}
    For \(\theta = (\theta_0, \theta_1,\ldots,\theta_m) \in \N^{m+1},\) define an \(R\)-algebra homomorphism
\begin{align*}
    \phi \colon R\{\bar{x}\}_{\D^*}^{\bar{\epsilon}} &\to S \\
    d^{\theta}x_i &\mapsto \delta_0^{\theta_0} \circ \delta_1^{\theta_1} \circ \delta_2^{\theta_2} \circ \cdots \circ \delta_m^{\theta_m}( a_i ) \\
    r &\mapsto r, \quad \text{for } r \in R
\end{align*}

This is clearly a surjective \(R\)-algebra homomorphism. To be an \((R,e)\)-algebra homomorphism, we must check that \(\D (\phi) \circ e = f \circ \phi.\)

Let \(d^{\theta}x_i \in R\{\bar{x}\}_{\D^*}^{\bar{\epsilon}}.\) Then \((\D(\phi) \circ e )(d^{\theta}x_i)=\D(\phi)(\sum_j d^{\theta + 1_j} x_i \otimes \epsilon_j).\) It follows that
\begin{align*}
    (\D(\phi) \circ e )(d^{\theta}x_i) &= \D(\phi) \left ( \sum_j d^{\theta + 1_j} x_i \otimes \epsilon_j \right) \\
    &=(\phi \otimes \id_{\D})\left ( \sum_j d^{\theta + 1_j} x_i \otimes \epsilon_j \right) \\
    &= \sum_j \left (\delta_0^{\theta_0 + 1_0} \circ \delta_1^{\theta_1 + 1_j} \circ \delta_2^{\theta_2 + 1_j} \circ \cdots \circ \delta_m^{\theta_m + 1_j}\right )(a_i) \otimes \epsilon_j \\
    &= \sum_j \delta_j \circ \left(\delta_0^{\theta_0} \circ \delta_1^{\theta_1} \circ \delta_2^{\theta_2} \circ \cdots \circ \delta_m^{\theta_m}\right) (a_i) \otimes \epsilon_j \quad \left(\text{as }(S,f) \text{ is a }\D^*\text{-ring}\right)\\
    &= f \left (\delta_0^{\theta_0} \circ \delta_1^{\theta_1} \circ \delta_2^{\theta_2} \circ \cdots \circ \delta_m^{\theta_m} \right) (a_i) \\
    &= (f \circ \phi )(d^{\theta}x_i)
\end{align*}

Thus \(\phi\) is a \((R,e)\)-algebra homomorphism. Clearly, this is the unique \(\phi\) with the desired properties.
\end{proof}

\begin{corollary}
    Suppose that \(\bar{\epsilon}\) and \(\bar{\mu}\) are two bases of \(\D.\) Then \(R\{\bar{x}\}_{\D^*}^{\bar{\epsilon}}\) and \(R\{\bar{x}\}_{\D^*}^{\bar{\mu}}\) are isomorphic as \((R,e)\)-algebras.
\end{corollary}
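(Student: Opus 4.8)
The plan is to deduce the corollary directly from the universal property established in \autoref{univ-prop}, exploiting the fact that the property of being ``generated as a \(\D^*\)-ring'' does not depend on the choice of basis. The key observation is that both \(R\{\bar{x}\}_{\D^*}^{\bar{\epsilon}}\) and \(R\{\bar{x}\}_{\D^*}^{\bar{\mu}}\) are \(\D^*\)-algebras over \((R,e)\) that are generated, as \(\D^*\)-rings, by the tuple \(\bar{x} = (x_i)_{i \in I}\). Indeed, by construction each is obtained by adjoining to \(R\) the indeterminates \(d^{\theta}x_i\), and these are precisely the images of the \(x_i\) under compositions of the coordinate operators; since the notion of a \(\D\)-subalgebra is phrased in terms of \(e\) alone (and is therefore basis-free), the \(\D^*\)-subring generated by \(\bar{x}\) is the whole ring regardless of which basis we use to name the generators.

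First I would apply \autoref{univ-prop} with the basis \(\bar{\epsilon}\), taking the target \(\D^*\)-algebra to be \((S,f) = R\{\bar{x}\}_{\D^*}^{\bar{\mu}}\) and the generating tuple \(\bar{a}\) to be \(\bar{x}\). This yields a unique surjective \((R,e)\)-algebra homomorphism \(\phi \colon R\{\bar{x}\}_{\D^*}^{\bar{\epsilon}} \to R\{\bar{x}\}_{\D^*}^{\bar{\mu}}\) with \(\phi(x_i) = x_i\) for all \(i \in I\). Symmetrically, applying \autoref{univ-prop} with the basis \(\bar{\mu}\) and target \(R\{\bar{x}\}_{\D^*}^{\bar{\epsilon}}\) produces a unique surjective \((R,e)\)-algebra homomorphism \(\psi \colon R\{\bar{x}\}_{\D^*}^{\bar{\mu}} \to R\{\bar{x}\}_{\D^*}^{\bar{\epsilon}}\) with \(\psi(x_i) = x_i\).

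Next I would show that \(\phi\) and \(\psi\) are mutually inverse. The composite \(\psi \circ \phi\) is an \((R,e)\)-algebra endomorphism of \(R\{\bar{x}\}_{\D^*}^{\bar{\epsilon}}\) fixing each \(x_i\). But the identity map is also such an endomorphism, and the uniqueness clause of \autoref{univ-prop} (again with basis \(\bar{\epsilon}\), target \(R\{\bar{x}\}_{\D^*}^{\bar{\epsilon}}\), and generating tuple \(\bar{x}\)) forces \(\psi \circ \phi = \id\). The analogous argument with the roles of the two bases reversed gives \(\phi \circ \psi = \id\). Hence \(\phi\) is an isomorphism of \((R,e)\)-algebras, as required.

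I do not anticipate a serious obstacle: the entire argument is a formal consequence of the universal property together with its uniqueness assertion. The only point requiring a moment's care is the basis-independence of ``generated as a \(\D^*\)-ring'' noted above, which is what ensures that \autoref{univ-prop} is genuinely applicable with each choice of basis to the ring built from the other.
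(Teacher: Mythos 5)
Your argument is correct, and it finishes differently from the paper. Both you and the paper begin by invoking \autoref{univ-prop} to produce the surjective \((R,e)\)-algebra homomorphism \(\phi \colon R\{\bar{x}\}_{\D^*}^{\bar{\epsilon}} \to R\{\bar{x}\}_{\D^*}^{\bar{\mu}}\) fixing the \(x_i\) (and your preliminary observation that ``generated as a \(\D^*\)-ring by \(\bar{x}\)'' is a basis-free condition is exactly the point needed to make the lemma applicable in both directions). The paper then concludes injectivity in one line by appealing to the algebraic independence of the family \(\bar{x}_{\D^*}\) over \(R\), i.e.\ by a concrete property of the target ring. You instead run the standard ``uniqueness of universal objects'' argument: build the reverse map \(\psi\), note that \(\psi\circ\phi\) and \(\id\) are both \((R,e)\)-algebra endomorphisms fixing the \(x_i\), and invoke the uniqueness clause of \autoref{univ-prop}. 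Your route is purely formal and arguably more robust --- it would survive in any setting where the universal property holds, without needing to inspect the underlying polynomial ring --- whereas the paper's one-line appeal to algebraic independence is shorter but leans on the explicit construction. One small point worth making explicit if you write this up: the uniqueness in \autoref{univ-prop} is stated for the surjective homomorphism, but since any \((R,e)\)-algebra homomorphism sending \(x_i\mapsto x_i\) is forced on every \(d^{\theta}x_i\) by the intertwining condition \(\phi\,\delta_j = \delta_j\,\phi\), uniqueness in fact holds among all such homomorphisms, which is what your comparison of \(\psi\circ\phi\) with \(\id\) uses.
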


\begin{proof}
    By \autoref{univ-prop}, the map \(\phi \colon R\{\bar{x}\}_{\D^*}^{\bar{\epsilon}} \to R\{\bar{x}\}_{\D^*}^{\bar{\mu}}\) that takes \(x_i\) to \(x_i\) is a surjective \(\D^*\)-algebra homomorphism over \((R,e).\) Since the family \(\bar{x}_{\D^*}\) is algebraically independent over \(R,\) \(\phi\) must be injective.
\end{proof}

In the remainder of this paper, we denote the unique \(\D^*\)-polynomial ring up to isomorphism as \(R\{\bar{x}\}_{\D^*}.\) We view the variable \(d^{\theta}x_i\) as \(\delta_0^{\theta_0} \circ \cdots \circ \delta_m^{\theta_m} (x_i)\) where \(\theta = (\theta_0, \theta_1,\ldots,\theta_m) \in \N^{m+1}.\)

\medskip
Using this construction, we recover the usual difference polynomial ring.

\begin{example}
     Let \(\D = K^m\) with the product \(K\)-algebra structure and \((R,e)\) be a \(\D^*\)-ring; that is, \((R,e)\) is a \(\D\)-ring as in \autoref{Degs}(b) with the additional condition that the endomorphisms pairwise commute. The \(\D^*\)-polynomial ring over \((R,e)\) coincides with the difference polynomial ring over \((R,\sigma_1,\ldots,\sigma_m).\)
\end{example}

\begin{remark}\label{diff-poly}
    We note that while we do not recover the differential polynomial ring as an instance of \(\D^*\)-polynomial rings, we do recover it as a homomorphic image. Let \(\D = K[\epsilon]/(\epsilon)^2\) with the usual \(K\)-algebra structure. Denote by \(\sigma = \delta_0\) and \(\delta = \delta_1\) the operators associated to the basis \(\{1,\epsilon\}.\) Let \((R,e)\) be a \(\D^*\)-ring such that \(\sigma\) is the identity on \(R;\) in other words, the structure of \((R,e)\) is just that of a differential ring with derivation \(\delta.\) Then the \(\D^*\)-polynomial ring over \((R,e)\) in the variable \(x,\) \(R\{x\}_{\D^*},\) is the usual polynomial ring over \(R\) in variables \(d^{\theta}x\) for \(\theta \in \N^2;\) in particular, this includes indeterminates of the form \(\sigma x, \sigma \delta x,\ldots \) . On the other hand, the differential polynomial ring over \(R\) in the variable \(x,\) \(R\{x\}_{\delta},\) is the usual polynomial ring over \(R\) in just the variables \(\delta^ix\) for \(i \in \N.\) It is then clear that \(R\{x\}_{\D^*}\) and \(R\{x\}_{\delta}\) are distinct objects. Nonetheless, as we will see in \autoref{consequence-differential}, one can still recover useful results as there is a surjective \((R,e)\)-algebra homomorphism from \(R\{x\}_{\D^*}\) to \(R\{x\}_{\delta};\) namely, the one that maps \(\sigma\delta^ix\) to \(\delta^ix.\)
\end{remark}

\subsection{Conservative systems of ideals}\label{cons-sys}

We briefly recall the notion of perfect conservative systems. Perfect conservative systems form a useful framework to carry out transfer of Noetherianity from a ring to an overring. In particular, using standard results on perfect conservative systems, one can show that if \(R\) has the ascending chain condition on radical ideals, then so does the polynomial ring \(R[x]\) (see Section 9 of Chapter 0 in \cite{kolchin73} for details).

\begin{definition}
Given a ring \(R,\) a conservative system, \(\C,\) is a set of ideals of \(R\) such that
\begin{enumerate}
    \item The intersection of any set of elements of \(\C\) is an element of \(\C;\)
    \item The union of any non-empty set of elements of \(\C,\) totally ordered by inclusion, is an element of \(\C.\)
\end{enumerate}
\end{definition}
A conservative system is  divisible if it contains \(I :s =\{x \in R \mid xs \in I\}\) for every \(I \in \C\) and \(s \in R.\) A conservative system is radical if every element is a radical ideal. We call a conservative system perfect if it is both divisible and radical.

Let \(\C\) be a conservative system of ideals of \(R\) and \(\Sigma \subset R.\) The \(\C\)-ideal generated by \(\Sigma,\) denoted \((\Sigma)_{\C},\) is the intersection of all elements of \(\C\) containing \(\Sigma;\) it is the smallest \(\C\)-ideal containing \(\Sigma.\)

\begin{definition}
    Let \(\C\) be a conservative system of ideals of \(R.\) We call \(\C\) Noetherian, or we say that \(R\) is \(\C\)-Noetherian, if any of the following equivalent conditions hold.
    \begin{itemize}
        \item Every element of \(\C\) is finitely generated as a \(\C\)-ideal;
        \item Every strictly increasing sequence of elements of \(\C\) is finite;
        \item Every nonempty set of elements of \(\C\) has a maximal element.
    \end{itemize}
\end{definition}

The following result for conservative systems can be found as Theorem 2.5 in \cite{poisson}.

\begin{fact}\label{poisson-thm}
    Let \(\C\) be a perfect conservative system of a ring \(R.\) Assume that for every prime \(\C\)-ideal, \(P,\) there exists a finite \(\Sigma \subset P\) and \(s \in R \setminus P\) such that \(P = (\Sigma)_{\C} : s.\) Then \(\C\) is Noetherian.
\end{fact}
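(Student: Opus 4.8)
The plan is to prove the result above by a Cohen-type \emph{maximal counterexample} argument, built on the calculus of prime \(\C\)-ideals that is available in any perfect conservative system. Suppose, for contradiction, that \(\C\) is not Noetherian. Then the family \(\mathcal{F}\) of \(\C\)-ideals that are \emph{not} finitely generated as \(\C\)-ideals is nonempty. First I would check that \(\mathcal{F}\) has a maximal element \(M\) by Zorn's lemma: a chain in \(\mathcal{F}\) has its union in \(\C\) by the second conservative-system axiom, and that union cannot be finitely generated, since otherwise its finitely many generators would already lie in a single member of the chain, forcing that member to equal the union. This yields a maximal non-finitely-generated \(\C\)-ideal \(M\), which is proper since \(R = (1)_{\C}\) is finitely generated.

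The key structural input is that, in a perfect conservative system, every \(\C\)-ideal is the intersection of the prime \(\C\)-ideals containing it. To establish this it suffices to show that for a \(\C\)-ideal \(I\) and any \(s \notin I\) there is a prime \(\C\)-ideal \(P \supseteq I\) with \(s \notin P\); I would take \(P\) maximal (by Zorn) among \(\C\)-ideals containing \(I\) and avoiding \(s\), and show \(P\) is prime. If \(ab \in P\) with \(a,b \notin P\), then \(b \in P : a\) forces \(P : a \supsetneq P\), so by divisibility (\(P : a \in \C\)) and maximality we get \(s \in P : a\), i.e.\ \(sa \in P\); then \(a \in P : s\) forces \(P : s \supsetneq P\), so \(s \in P : s\), giving \(s^2 \in P\) and hence \(s \in P\) by radicality --- a contradiction. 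From the prime decomposition I would read off the two identities that drive the argument: \(I = (I : s) \cap (I \cup \{s\})_{\C}\) for any \(\C\)-ideal \(I\), and \((IJ)_{\C} = I \cap J\) for \(\C\)-ideals \(I,J\); both follow by comparing the sets of prime \(\C\)-ideals containing each side.

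Next I would show that \(M\) is prime. If not, there are \(a,b \notin M\) with \(ab \in M\) (using that \(M\) is radical). By maximality, both \(M : a\) (a \(\C\)-ideal by divisibility, strictly containing \(M\) since \(b \in M : a\)) and \((M \cup \{a\})_{\C}\) (strictly containing \(M\) since \(a \notin M\)) are finitely generated. The colon identity gives \(M = (M : a) \cap (M \cup \{a\})_{\C}\), and the product identity rewrites this as \(M = \bigl((M:a)\,(M \cup \{a\})_{\C}\bigr)_{\C}\), which is the \(\C\)-ideal generated by the finitely many pairwise products of the generators of the two factors --- so \(M\) would be finitely generated, a contradiction. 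Hence \(M\) is a prime \(\C\)-ideal.

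Finally I would invoke the hypothesis: since \(M\) is prime, \(M = (\Sigma)_{\C} : s\) for a finite \(\Sigma \subseteq M\) and some \(s \notin M\). Writing \(A = (\Sigma)_{\C}\) (finitely generated, contained in \(M\)), the colon identity gives \(A = M \cap (A \cup \{s\})_{\C}\), and passing to the induced conservative system on \(R/A\) reduces to the case \(A = 0\), where \(M = \operatorname{ann}(s)\). Here the prime decomposition identifies \(\operatorname{ann}(M)\) with the finitely generated \(\C\)-ideal \((s)_{\C}\), while \(M \cap (s)_{\C} = 0\) and \(M \vee (s)_{\C}\) is finitely generated by maximality; a diamond-isomorphism argument in the lattice of \(\C\)-ideals should then transfer finite generation from \(M \vee (s)_{\C}\) across the complement \((s)_{\C}\) to \(M\) itself, contradicting \(M \in \mathcal{F}\). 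I expect this last step --- converting the ``quasi-finitely-generated'' presentation \(M = (\Sigma)_{\C} : s\) of the \emph{prime} \(M\) into honest finite generation --- to be the main obstacle, precisely because a prime cannot be written as an intersection of strictly larger \(\C\)-ideals, so the product trick of the previous paragraph is unavailable and one must instead exploit the annihilator/complement structure together with the compatibility of the diamond isomorphism with \(\C\)-generation.
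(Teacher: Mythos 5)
The paper does not prove this statement; it imports it verbatim as Theorem~2.5 of \cite{poisson}, so your proposal has to stand on its own. Most of it does. The Zorn argument for a maximal non-finitely-generated $\C$-ideal $M$ is fine; your primality proof for a $\C$-ideal maximal among those containing $I$ and avoiding $s$ (via $P:a\supsetneq P$, divisibility, and radicality) is correct; the two identities $I=(I:s)\cap(I\cup\{s\})_{\C}$ and $I\cap J=(IJ)_{\C}$ hold and your prime-decomposition derivation of them works; and the deduction that $M$ is prime via $M=\bigl((M:a)(M\cup\{a\})_{\C}\bigr)_{\C}$ is the standard Ritt--Raudenbush/Kolchin argument. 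One lemma you use tacitly and should record: a finitely generated $\C$-ideal of the form $(T)_{\C}$ equals $(T_0)_{\C}$ for some \emph{finite} $T_0\subseteq T$; this requires closure of $\C$ under directed unions, which follows from the chain axiom by a transfinite-induction (Iwamura-type) argument.

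The genuine gap is the final step, which you yourself flag as the main obstacle. Passing to $R/A$, asserting $\operatorname{ann}(M)=(s)_{\C}$, and invoking a ``diamond isomorphism'' in the lattice of $\C$-ideals does not constitute a proof: the lattice of $\C$-ideals is not known to be modular, the identification $\operatorname{ann}(M)=(s)_{\C}$ is unjustified, and even given an isomorphism of intervals there is no reason it should transport finite generation as a $\C$-ideal. No new machinery is needed, though --- the step follows from the colon identity you already have. Since $s\notin M$, the $\C$-ideal $(M\cup\{s\})_{\C}\supsetneq M$ is finitely generated, hence by the lemma above equals $(\Phi\cup\{s\})_{\C}$ for some finite $\Phi\subseteq M$. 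Put $J=(\Sigma\cup\Phi)_{\C}$; then $J\subseteq M$. The colon identity gives $J=(J:s)\cap(\Sigma\cup\Phi\cup\{s\})_{\C}$. Now $M=(\Sigma)_{\C}:s$ yields $Ms\subseteq(\Sigma)_{\C}\subseteq J$, so $M\subseteq J:s$; and $M\subseteq(M\cup\{s\})_{\C}=(\Phi\cup\{s\})_{\C}\subseteq(\Sigma\cup\Phi\cup\{s\})_{\C}$. Hence $M\subseteq J$, so $M=(\Sigma\cup\Phi)_{\C}$ is finitely generated, the desired contradiction. Replacing your last paragraph with this closes the argument.
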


We will use this fact when proving our main result, the \(\D^*\)-basis theorem, in \autoref{section-d-basis-theorem}.
\section{Reduction results}\label{section-reduction}

In this section we introduce two important assumptions (Assumptions \ref{ass-1} and \ref{ass-2}) that we will adhere to for the remainder of the paper. We discuss the notion of a ranked basis and use this to produce a simpler form of the product rule. Following this, we introduce the notion of a ranking of \(\D^*\)-polynomials. We define reduction of \(\D^*\)-polynomials and prove the main result for this chapter: the \(\D^*\)-reduction lemma (\autoref{div_alg}).

As \(\D\) is a finite-dimensional \(K\)-algebra, we can write \(\D\) as a finite product of local \(K\)-algebras \(\D = \D_1 \times \cdots \times \D_t.\) Throughout the rest of the paper, we make the following assumption.

\begin{assumption}\label{ass-1}
    For \(i = 1, \ldots, t,\) the residue field of \(\D_i\) is \(K.\)
\end{assumption}

We note that all the examples presented in \autoref{Degs} satisfy this assumption.

\begin{definition}[Ranked basis]\label{ranked-basis} Let \(\D_i\) be a local finite-dimensional \(K\)-algebra with maximal ideal \(\m_i\) and residue field \(K.\) 
\begin{enumerate}
    \item Let \(\bar{\epsilon}_i = \{\epsilon_0, \ldots, \epsilon_{m_i}\}\) be a basis for \(\D_i\) as a \(K\)-vector space with \(\epsilon_0 \in K^*\) and \(\epsilon_j \in \m_i\) for \(j = 1,\ldots, m_i.\) Define \(\nu_i(j)\) to be the smallest integer \(r\) such that \(\epsilon_j \in \m_i^r / \m_i^{r+1}\) (recall that by Nakayama's lemma \(\m_i\) is nilpotent). We say that \(\bar{\epsilon}_i\) is a \emph{ranked basis} for \(\D_i\) if \(\nu_i(j) \le \nu_i(k)\) for \(1 \leq j<k \leq m_i.\)
    \item For \(\D= \prod_{i=1}^t \D_i,\) we say that an ordered basis \(\bar{\epsilon}\) is a ranked basis for \(\D\) if \(\bar{\epsilon}\) is of the form \(\bar{\epsilon}_1 \cup \ldots \cup \bar{\epsilon}_t\) where each \(\bar{\epsilon}_i\) is a ranked basis of \(\D_i.\) Here we identify \(\D_i\) with its copy in \(\D.\)
\end{enumerate}
\end{definition}

Note that ranked bases exist. For \(\D_i,\) we can build a basis \(\bar{\epsilon}_i\) by concatenating bases for \(\m_i^j / \m_i^{j+1}\) for \(j = 0,1,\ldots\) .

\medskip
From now on we fix a ranked basis \(\bar{\epsilon}\) for \(\D.\) For a \(\D^*\)-ring \((R,e),\) we denote the coordinate maps of \(e\) with respect to \(\bar{\epsilon}\) by \[\Delta = \{\sigma_1, \delta_{1,1}, \ldots \delta_{1,m_1}, \ldots,\sigma_t, \delta_{t,1}, \ldots \delta_{t,m_t}\}.\] Note that \(\{\sigma_i, \delta_{i,1}, \ldots, \delta_{i,m_i}\}\) are the coordinate maps of \(pr_i \circ e \colon R \to \D_i(R)\) with respect to the basis \(\bar{\epsilon_i}\) where \(pr_i\) denotes the canonical projection \(\D \to \D_i.\) Additionally, note that each \(\sigma_i\) is an endomorphism of \(R\) and they correspond to the associated endomorphisms as appearing in Section 4 of \cite{mtfo0char}.

Just as difference rings are rings equipped with \emph{injective} endomorphisms, we assume that each of the associated endomorphisms are injective.

\begin{assumption}\label{ass-2}
    For \(i = 1, \ldots, t,\) the endomorphism \(\sigma_i\) is injective.
\end{assumption}

\begin{lemma} \label{prod_rule}
Let \(R\) be a \(\D^*\)-ring. The coordinate maps of \(e\) with respect to a ranked basis \(\bar{\epsilon}\) on \(R\) satisfy the following product rule for all \(r,s \in R,\) and for \(1 \le i \le t,\) \(1 \le j \le m_i:\)
\[\delta_{i,j}(rs) = \delta_{i,j} (r) \sigma_i(s) + \sigma_i(r) \delta_{i,j} (s) + \sum_{(p,q) \in \gamma_i (j)} \alpha_{i,j}^{p,q} \delta_{i,p} (r) \delta_{i,q} (s)\] where \(\gamma_i (j) := \{(p,q) \mid 1 \leq p, q \leq m_i, \nu_i(p) + \nu_i(q) \le \nu_i(j)\}\) and \(\alpha_{i,j}^{p,q}\) is the coefficient of \(\epsilon_{i,j}\) in the product \(\epsilon_{i,p} \cdot \epsilon_{i,q}\) in \(\D_i.\)
\end{lemma}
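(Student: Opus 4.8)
The plan is to derive this refined product rule directly from the general product rule (\ref{product}) applied to the fixed ranked basis $\bar{\epsilon}$, using two features of the setup: the product decomposition $\D = \prod_{k=1}^t \D_k$ and the $\m_i$-adic filtration encoded by the ranking. Since $\bar{\epsilon} = \bar{\epsilon}_1 \cup \cdots \cup \bar{\epsilon}_t$, equation (\ref{product}) writes $\delta_{i,j}(rs)$ as a sum $\sum_{a,b} c_{a,b}\,\delta_a(r)\delta_b(s)$ over all ordered pairs of elements of $\bar{\epsilon}$, where $c_{a,b}$ is the coefficient of $\epsilon_{i,j}$ in the product $\epsilon_a \epsilon_b$. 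Because $\D$ is a \emph{direct product} of the $K$-algebras $\D_k$, these factors are orthogonal: if $\epsilon_a \in \D_k$ and $\epsilon_b \in \D_{k'}$ with $k \neq k'$ then $\epsilon_a\epsilon_b = 0$, and any product of two elements of $\D_k$ lies in $\D_k$, hence has zero $\epsilon_{i,j}$-component whenever $k \neq i$ (as $\epsilon_{i,j}\in\D_i$). Thus only pairs with both factors in $\D_i$ survive, and the sum collapses to one over the local basis $\{\epsilon_0,\epsilon_{i,1},\dots,\epsilon_{i,m_i}\}$ of $\D_i$, with associated maps $\sigma_i,\delta_{i,1},\dots,\delta_{i,m_i}$.

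Next I would split this local sum according to whether each index is the ``unit index'' $0$ or lies in $\{1,\dots,m_i\}$. The element $\epsilon_0$ is a unit in $K\cdot 1_{\D_i}$; since $e$ is a unital $K$-algebra homomorphism and $\sigma_i$ is a ring endomorphism (so $\sigma_i(1)=1$), comparing the $\D_i$-components of $e(1)=1\otimes 1_{\D_i}$ forces $\epsilon_0 = 1_{\D_i}$. Consequently $\epsilon_0\epsilon_0 = 1 \notin \m_i$ contributes nothing to the $\epsilon_{i,j}$-coefficient (because $\epsilon_{i,j}\in\m_i$), while $\epsilon_0\epsilon_{i,q}=\epsilon_{i,q}$ and $\epsilon_{i,p}\epsilon_0=\epsilon_{i,p}$ contribute the coefficient $1$ exactly when $q=j$, respectively $p=j$. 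This produces precisely the two terms $\sigma_i(r)\delta_{i,j}(s)$ and $\delta_{i,j}(r)\sigma_i(s)$, each with coefficient $1$.

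It then remains to treat the pairs with $1\le p,q\le m_i$, i.e. $\epsilon_{i,p},\epsilon_{i,q}\in\m_i$, and here lies the crux. The ranked basis is compatible with the $\m_i$-adic filtration: since $\bar{\epsilon}_i$ is obtained by concatenating lifts of bases of the successive quotients $\m_i^r/\m_i^{r+1}$, a dimension count yields $\m_i^s = \operatorname{span}_K\{\epsilon_{i,l} : \nu_i(l)\ge s\}$ for every $s$. Granting this, $\epsilon_{i,p}\epsilon_{i,q}\in \m_i^{\nu_i(p)+\nu_i(q)} = \operatorname{span}_K\{\epsilon_{i,l}:\nu_i(l)\ge \nu_i(p)+\nu_i(q)\}$, so its $\epsilon_{i,j}$-coefficient $\alpha_{i,j}^{p,q}$ must vanish unless $\nu_i(j)\ge \nu_i(p)+\nu_i(q)$, that is, unless $(p,q)\in\gamma_i(j)$. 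Restricting the remaining sum accordingly gives exactly $\sum_{(p,q)\in\gamma_i(j)}\alpha_{i,j}^{p,q}\,\delta_{i,p}(r)\delta_{i,q}(s)$, and combining the three contributions yields the stated identity.

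I expect the main obstacle to be establishing the filtration-compatibility of the ranked basis cleanly, namely the equality $\m_i^s=\operatorname{span}_K\{\epsilon_{i,l}:\nu_i(l)\ge s\}$; this is what converts the purely algebraic constraint ``the product lies in $\m_i^{\nu_i(p)+\nu_i(q)}$'' into the combinatorial condition $(p,q)\in\gamma_i(j)$. Everything else is bookkeeping, organized around the orthogonality of the factors $\D_k$ and the normalization $\epsilon_0 = 1_{\D_i}$.
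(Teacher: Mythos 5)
Your argument follows the paper's proof essentially step for step: kill the cross-terms between distinct local factors using orthogonality in \(\D = \prod_k \D_k\), peel off the two \(\epsilon_0\)-terms, and use the \(\m_i\)-adic filtration to conclude that \(\alpha_{i,j}^{p,q}\) vanishes when \(\nu_i(p)+\nu_i(q) > \nu_i(j)\). The only difference is one of explicitness: you isolate the spanning fact \(\m_i^{s} = \operatorname{span}_K\{\epsilon_{i,l} \mid \nu_i(l) \ge s\}\) (and the normalisation \(\epsilon_0 = 1_{\D_i}\)) as the points requiring justification, whereas the paper uses both implicitly.
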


\begin{proof}
Note that for \(i \neq k,\) \(\epsilon_{i,p} \cdot \epsilon_{k,q} = 0,\) as each basis element is in a different local component. Then for all \(r,s \in R,\) by \((\ref{product})\) we get \[\delta_{i,j}(rs) = \delta_{i,j} (r) \sigma_i(s) + \sigma_i(r) \delta_{i,j} (s) + \sum_{1 \le p,q \le m_i} \alpha_{i,j}^{p,q}\delta_{i,p} (r) \delta_{i,q} (s)\] where \(\alpha_{i,j}^{p,q}\) is the coefficient of \(\epsilon_{i,j}\) in the product \(\epsilon_{i,p} \cdot \epsilon_{i,q}\) in \(\D_i.\) For a given \(p \geq 1,\) we have that \(\epsilon_{i,p} \in \m_i^{\nu_i(p)} / \m_i^{\nu_i(p)+1}.\) Thus \(\epsilon_{i.p} \cdot \epsilon_{i,q} \in \m^{\nu_i(p)+\nu_i(q)}\) when \(p,q \geq 1.\) As \(\epsilon_j \in \m_i^{\nu_i(j)} / \m_i^{\nu_i(j)+1},\) we have that \(\alpha_{i,j}^{p,q}\) is zero when \(\nu_i(p)+\nu_i(q) \ge \nu_i(j).\)

Thus we have \[\delta_{i,j}(rs) = \delta_{i,j} (r) \sigma_i(s) + \sigma_i(r) \delta_{i,j} (s) + \sum_{(p,q) \in \gamma_i (j)} \alpha_{i,j}^{p,q} \delta_{i,p} (r) \delta_{i,q} (s)\] where \(\gamma_i(j) := \{(p,q) \mid 1 \leq p,q \leq m_i, \nu_i(p) + \nu_i(q) \le \nu_i(j)\}.\)
\end{proof}

\begin{remark}
    Note that the standard bases provided in \autoref{Degs} are in fact ranked bases.
\end{remark}

Let \((R,e)\) be a \(\D^*\)-ring, \(\bar{x}=\{x_1, \ldots , x_n\}\) and \((R\{\bar{x}\}_{\D^*},e)\) the \(\D^*\)-polynomial ring over \((R,e)\) in variables \(\bar{x}.\) Let \(\bar{\epsilon}\) be a ranked basis for \(\D,\) and \(M: = |\bar{\epsilon} |\) (in other words, \(M = \dim_K \D\)).

For \(d^{\theta} x_j \in \bar{x}_{\D^*},\) we have \(\sigma_i (d^{\theta} x_j) = d^{\theta + 1_{i0}} x_j,\) and \(\delta_{i,p}(d^{\theta} x_j) = d^{\theta + 1_{ip}} x_j,\) where, for ease of notation, we denote by \(1_{ip}\) the \(M\)-tuple with the only non-zero entry a \(1\) in the position corresponding to \(\epsilon_{i,p}.\) We will sometimes denote \(\sigma_i\) by \(\delta_{i,0}.\)

For \(\theta \in \N^M\) and \(1 \leq i \leq t,\) let \(\theta_i'\) be the \(m_i\)-tuple choosing the entries of \(\theta\) corresponding to the \(\epsilon_{i,p}\) for \(1 \leq p \leq m_i.\) We denote by \(\ord_i(\theta)\) the sum of the entries of \(\theta_i'\) and call it the \(i^{th}\)-order of \(\theta.\) We define the order of \(\theta\) to be the sum of the \(i^{th}\)-orders of \(\theta\) and denote it by \(\ord^{\delta}(\theta).\) For \(u = d^{\theta}x_j \in \bar{x}_{\D^*},\) we define the \(i^{th}\)-order of \(u\) to be the \(i^{th}\)-order of \(\theta\) and the order of \(u\) to be the order of \(\theta.\) For example, the order of \(\sigma_i\delta_{i,1}\delta_{i,2}\) is two while the order of \(\sigma_i\) is zero.

\begin{definition}
     Recall that \(\Delta\) is the set of coordinate maps of \(e\) with respect to the ranked basis \(\bar{\epsilon}.\) A \emph{ranking} of \(\bar{x}_{\D^*}\) is a total ordering satisfying the additional conditions:
\begin{itemize}
    \item For all \(u \in \bar{x}_{\D^*}\) and \(\delta \in \Delta,\) \(u < \delta (u);\)
    \item For all \(u,v \in \bar{x}_{\D^*}\) and \(\delta  \in \Delta,\) \(u < v\) implies \(\delta (u) < \delta (v);\)
    \item For \(\delta_{i,j}, \delta_{i,k} \in \Delta\) and \(u \in \bar{x}_{\D^*},\) \(\nu_i(j) < \nu_i(k)\) implies \(\delta_{i,j}(u) < \delta_{i,k}(u)\) for any \(0 \leq j,k \leq m_i.\)
\end{itemize}
\end{definition}

If \(u,v \in \bar{x}_{\D^*},\) we say that \(v\) is a transform of \(u\) if \(v = \theta (u)\) for some composition of \(\Delta\)-operators \(\theta.\) If \(\ord ^{\delta}(\theta) >0,\) we say that \(v\) is a \(\delta\)-transform of \(u.\) If \(\ord^{\delta}(\theta)=0,\) we say that \(v\) is a \(\sigma\)-transform of \(u.\) Note that a transform of \(u\) is a \(\sigma\)-transform if and only if \(\theta\) consists only of compositions of the \(\sigma_i.\)

Any ranking is a well-order; that is, every non-empty subset has a least element. A ranking is sequential if it has order type \(\N;\) that is, every variable is of higher rank than only finitely many other variables. An example of a sequential ranking is obtained by ordering the set of variables \(d^{\theta}x_j\) lexicographically with respect to \((T,j, \theta_{M-1},\ldots,\theta_0)\) where \(T\) is the sum of all the entries of \(\theta.\)

From now on, we fix \(\bar{x}\) and a ranking on \(\bar{x}_{\D^*}.\) Let \(f \in R\{\bar{x}\}_{\D^*}\setminus R.\) We define the leader of \(f,\) \(u_f,\) to be the variable of highest rank appearing in \(f.\) We can write \(f\) in the following form: \[f = g_d \cdot u_f^d + \cdots + g_1 \cdot u_f + g_0\] where \(u_{g_i} < u_f\) and \(g_d\) is non-zero. We define the initial of \(f,\) denoted \(I_f,\) as \(g_d.\) For \(u \in \bar{x}_{\D^*},\) we define \(\deg_u(f)\) to be the highest power of \(u\) appearing in \(f,\) or to be \(0\) if \(u\) does not appear in \(f.\) We write \(\deg (f)\) for \(\deg_{u_f}(f).\) 
We can extend our ranking to a ranking on \(\D^*\)-polynomials. This is a pre-order. We say that \(\rk(g) < \rk(f)\) if \(u_g < u_f\) or if \(u_g=u_f\) and \(\deg(g) < \deg(f).\) If \(g\) and \(f\) have the same leader and degree, we say \(\rk(g)=\rk(f).\)

We define the separant of \(f,\) \(s_f,\) as the formal derivative of \(f\) with respect to \(u_f,\) that is \[s_f := \frac{\delta f}{\delta u_f} = \sum_{n=1}^d n g_n \cdot u_f^{n-1}.\] Note that \(\rk(I_f) < \rk(f)\) and \(\rk(s_f) < \rk(f).\)

In the following lemma, we establish a key fact about the rank of \(\delta_{i,j} (f) -\sigma_i (s_f) \delta_{i,j}(u_f)\) in comparison to \(\delta_{i,j}(u_f).\) We employ this fact multiple times when proving the \(\D^*\)-reduction lemma.

\begin{lemma}\label{separant}
Let \(f \in R\{\bar{x}\}_{\D^*} \setminus R\) and \(\delta_{i,j} \in \Delta' = \Delta \setminus \cup_i\{\sigma_i\}.\) Then \[\rk \big(\delta_{i,j} (f) -\sigma_i (s_f) \delta_{i,j}(u_f)\big) < \rk \big(\delta_{i,j} (u_f)\big).\]
\end{lemma}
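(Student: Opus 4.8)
The plan is to show that the variable $v := \delta_{i,j}(u_f)$ occurs in $\delta_{i,j}(f)$ only to the first power, with coefficient exactly $\sigma_i(s_f)$, while every other variable occurring in $\delta_{i,j}(f)$ has strictly smaller rank than $v$. The desired rank inequality then follows at once: subtracting $\sigma_i(s_f)\,\delta_{i,j}(u_f)$ removes all occurrences of $v$, leaving a polynomial whose leader is strictly below $v$, hence of rank $< \rk(v)$.

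First I would record the relevant monotonicity facts. Since $\delta_{i,j} \in \Delta'$ we have $j \ge 1$, hence $\nu_i(j) \ge 1$, and $v = \delta_{i,j}(u_f)$ is a genuine variable of $\bar{x}_{\D^*}$. I claim that for any variable $w \le u_f$ one has $\delta_{i,j}(w) \le v$, while $\sigma_i(w) < v$ and $\delta_{i,p}(w) < v$ whenever $\nu_i(p) < \nu_i(j)$, with equality $\delta_{i,j}(w) = v$ only when $w = u_f$. These follow from the three defining properties of a ranking: monotonicity gives $w < u_f \Rightarrow \delta(w) < \delta(u_f)$, and the $\nu$-ordering condition gives $\delta_{i,p}(u_f) < \delta_{i,j}(u_f)$ when $\nu_i(p) < \nu_i(j)$ (using $\nu_i(0)=0 < \nu_i(j)$ for the $\sigma_i = \delta_{i,0}$ case). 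In particular, the variable $v$ can be produced by applying an operator to a variable $\le u_f$ only by applying $\delta_{i,j}$ to $u_f$ itself.

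Next I would expand $f = \sum_{n=0}^d g_n u_f^n$, where every variable of each $g_n$ lies below $u_f$, and apply $\delta_{i,j}$ termwise via the refined product rule of \autoref{prod_rule}. The heart of the computation is the congruence $\delta_{i,j}(u_f^n) \equiv n\,\sigma_i(u_f)^{n-1} v$ modulo terms of rank $< v$, proved by induction on $n$ from the twisted Leibniz rule $\delta_{i,j}(u_f \cdot u_f^{n-1}) = \delta_{i,j}(u_f)\sigma_i(u_f^{n-1}) + \sigma_i(u_f)\delta_{i,j}(u_f^{n-1}) + \sum_{(p,q)\in\gamma_i(j)}\alpha_{i,j}^{p,q}\delta_{i,p}(u_f)\delta_{i,q}(u_f^{n-1})$. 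Combining this with the monotonicity facts, the term $\delta_{i,j}(g_n)\sigma_i(u_f)^n$ and the correction sum contribute only variables strictly below $v$, while $\sigma_i(g_n)\delta_{i,j}(u_f^n)$ contributes $n\,\sigma_i(g_n)\sigma_i(u_f)^{n-1} v$ modulo rank $< v$. Summing over $n$ and using that $\sigma_i$ is a ring endomorphism of $R\{\bar{x}\}_{\D^*}$, I obtain $\sum_n n\,\sigma_i(g_n)\sigma_i(u_f)^{n-1} = \sigma_i\big(\sum_n n\, g_n u_f^{n-1}\big) = \sigma_i(s_f)$, so that $\delta_{i,j}(f) \equiv \sigma_i(s_f)\, v$ modulo rank $< v$, which is precisely the statement.

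I expect the main obstacle to be the bookkeeping needed to verify that none of the correction terms ever reintroduce the variable $v$, both inside the induction and in the final summation. This is exactly where the ranked basis is indispensable: membership $(p,q) \in \gamma_i(j)$ requires $\nu_i(p) + \nu_i(q) \le \nu_i(j)$ with $p,q \ge 1$, and since each $\nu_i(p),\nu_i(q) \ge 1$ this forces both $\nu_i(p) < \nu_i(j)$ and $\nu_i(q) < \nu_i(j)$. Consequently every variable appearing in a correction term has the form $\delta_{i,p}(w)$ or $\delta_{i,q}(w)$ with $w \le u_f$ and $\nu < \nu_i(j)$, which by the monotonicity facts lies strictly below $v$. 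This containment is what keeps all error terms safely beneath $v$ throughout, so that after subtracting $\sigma_i(s_f)\,\delta_{i,j}(u_f)$ the leader drops strictly below $\delta_{i,j}(u_f)$, as required.
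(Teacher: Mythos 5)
Your proposal is correct and follows essentially the same route as the paper's proof: expand \(f\) in powers of \(u_f\), induct on the degree using the twisted Leibniz rule of \autoref{prod_rule}, and use the ranked-basis observation that \((p,q)\in\gamma_i(j)\) with \(p,q\ge 1\) forces \(\nu_i(p),\nu_i(q)<\nu_i(j)\), so all correction terms stay strictly below \(\delta_{i,j}(u_f)\). The only difference is organizational -- you isolate the congruence \(\delta_{i,j}(u_f^n)\equiv n\,\sigma_i(u_f)^{n-1}\delta_{i,j}(u_f)\) and then multiply by \(\sigma_i(g_n)\), whereas the paper inducts directly on \(\delta_{i,j}(g\cdot u_f^n)-n\sigma_i(g\cdot u_f^{n-1})\delta_{i,j}(u_f)\) -- which is an equivalent bookkeeping of the same argument.
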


\begin{proof}
Note that if \(f,g\) are two \(\D^*\)-polynomials over \(R,\) the leaders of \(f+g\) and \(f \cdot g\) are bounded above by the maximum of \(u_f,u_g\) with respect to the ranking.

Firstly, suppose that \(f\) is of the form \(g \cdot u_f^n\) for some \(n \in \N\) with \(u_g < u_f.\)

Let \(n=1.\) By the product rules for \(\Delta'\) in \autoref{prod_rule}, we have: \[\delta_{i,j}(g \cdot u_f) = \delta_{i,j} (g) \sigma_i(u_f) + \sigma_i(g) \delta_{i,j} (u_f) + \sum_{(p,q) \in \gamma_i (j)} \alpha_{i,j}^{p,q} \delta_{i,p} (g) \delta_{i,q} (u_f).\]

As our basis is ranked, for \((p,q) \in \gamma_i(j),\) we have \(p,q < j.\) By the definition of the ranking, we have that for any \(p<j,\) \(\delta_{i,p}(u_f) < \delta_{i,j}(u_f).\)  Let \(v\) be any variable appearing in \(g.\) As \(v < u_f,\) we must have \(\delta_{i,p} (v) < \delta_{i,j}(u_f)\) for any \(p < j.\) From this, we see that the leader of \(\delta_{i,j}(f)\) is bounded above by \(\delta_{i,j}(u_f)\) and we have \(\delta_{i,j}(f) = \sigma(g) \delta_{i,j}(u_f) + h\) with \(u_h < \delta_{i,j}(u_f).\) As (in this case) \(\sigma_i(g)=\sigma_i (s_f),\) we have that \[h = \delta_{i,j}(f) - \sigma_i(s_f) \delta_{i,j}(u_f)\] and \(\rk (h) < \rk (\delta_{i,j}(u_f)).\) Thus the lemma holds for polynomials of the form \(g \cdot u_f.\)

Suppose now that the result holds for all \(m<n.\) By assumption, we know that
\begin{equation}\label{ass-step}
\rk (\delta_{i,j}(g \cdot u_f^{n-1}) - (n-1)\sigma_i(g \cdot u_f^{n-2})\delta_{i,j}(u_f)) < \rk (\delta_{i,j}(u_f)).
\end{equation}
Note that as \(\sigma_i = \delta_{i,0} < \delta_{i,j},\) we have that \(\sigma_i(u_f) < \delta_{i,j}(u_f).\) It is easy to see that multiplying a polynomial by a variable smaller than its leader does not change its rank. Thus we have \(\rk (\delta_{i,j}(u_f)\sigma_i(u_f)) = \rk(\delta_{i,j}(u_f)).\) Multiplying the polynomials in (\ref{ass-step}) by \(\sigma_i(u_f),\) we obtain \[\rk \big(\delta_{i,j}(g \cdot u_f^{n-1})\sigma_i(u_f) - (n-1)\sigma_i(g \cdot u_f^{n-1})\delta_{i,j}(u_f)\big) < \rk \big(\delta_{i,j}(u_f)\sigma_i(u_f)\big).\]

By the product rules for \(\Delta',\) we have:
\begin{align*}
    \delta_{i,j}(g \cdot u_f^n) - n\sigma_i(g \cdot u_f^{n-1}) &= \delta_{i,j}(g \cdot u_f^{n-1}\cdot u_f) - n\sigma_i(g \cdot u_f^{n-1}) \\
    &= \delta_{i,j}(g \cdot u_f^{n-1})\sigma_i(u_f) - (n-1)\sigma_i(g \cdot u_f^{n-1})\delta_{i,j}(u_f) \, \\
    & \quad + \sum_{(p,q) \in \gamma_i (j)} \alpha_{i,j}^{p,q} \delta_{i,p} (g \cdot u_f^{n-1}) \delta_{i,q} (u_f)
\end{align*}
As in the base case, for \(v\) any variable appearing in \(g\) or \(u_f\) and for \(p<j,\) we have \(\delta_{i,p} (v) < \delta_{i,j}(u_f).\) Using this, we obtain
\begin{align*}
    \rk\Big(\delta_{i,j}(g \cdot u_f^n) - n\sigma_i(g \cdot u_f^{n-1})\Big) &= \rk \bigg(\delta_{i,j}(g \cdot u_f^{n-1})\sigma_i(u_f) - (n-1) \sigma_i(g \cdot u_f^{n-1})\delta_{i,j}(u_f) \\
    &\quad + \sum_{(p,q) \in \gamma_i (j)} \alpha_{i,j}^{p,q} \delta_{i,p} (g \cdot u_f^{n-1}) \delta_{i,q} (u_f)\bigg) \\
    &< \rk \Big(\delta_{i,j}(u_f)\sigma_i(u_f) + \sum_{(p,q) \in \gamma_i (j)} \alpha_{i,j}^{p,q} \delta_{i,p} (g \cdot u_f^{n-1}) \delta_{i,q} (u_f)\Big) \\
    &< \rk \Big(\delta_{i,j}(u_f)\sigma_i(u_f)\Big) \\
    &= \rk \big(\delta_{i,j}(u_f)\big)
\end{align*}
Thus the lemma holds for polynomials of the form \(g \cdot u_f^n.\)

We now consider an arbitrary \(\D^*\)-polynomial \(f\) with leader \(u_f.\) Recall that we can write \(f\) as \[f = g_d \cdot u_f^d + \cdots + g_1 u_f + g_0\] where \(u_{g_i} < u_f.\) As the operators \(\delta_{i,j}, \sigma_i\) are additive, we have that

\begin{align*}
    &\rk\Big(\delta_{i,j}(f) - \sigma_i(s_f)\delta_{i,j}(u_f)\Big) \\
    &=\rk\bigg(\delta_{i,j}\Big(\sum_{n=0}^d g_n \cdot u_f^n\Big)-\sigma_i\Big(\sum_{n=0}^d n g_n u_f^{n-1}\Big)\delta_{i,j}(u_f)\bigg) \\
    &=\rk\bigg(\sum_{n=0}^d \Big(\delta_{i,j}(g_n \cdot u_f^n) - n\sigma_i(g_n \cdot u_f^{n-1})\delta_{i,j}(u_f)\Big)\bigg) \\
    &< \rk\big(\delta_{i,j}(u_f)\big)
\end{align*}

where the inequality holds as the rank of each individual summand is less than \(\rk(\delta_{i,j}(u_f)),\) and the rank of a finite sum of \(\D^*\)-polynomials is bounded above by the rank of each individual summand. So the lemma holds for all \(\D^*\)-polynomials.
\end{proof}

\begin{remark}\label{auto-comp}
    Let \(f \in R\{\bar{x}\}_{\D^*}\) and \(\tau\) be a composition of \(\Delta\)-operators such that \(\ord^{\delta} (\tau) = 0.\) It is easy to see that
    \[\tau(f) = \tau(I_f) \tau(u_f)^d + h\] where \(d = \deg(f)\) and \(\rk (h) < \rk (u_f^d).\)
\end{remark}

\begin{remark} \label{composition}
    Recall that \(\Delta' = \Delta \setminus \cup_i \{\sigma_i\}.\) If \(\psi \neq \id\) is a composition of \(\Delta\)-operators, then \(\psi = \sigma_1^{n_1} \circ \cdots \circ \sigma_t^{n_t} \circ \phi\) for some natural numbers \(n_1,\ldots,n_t\) and \(\phi\) a composition of \(\Delta'\)-operators. Using \autoref{separant} repeatedly, we see that, for \(\phi \neq \id,\) \[\psi (f) = \left (\sigma_1^{\ord_1(\psi)+n_1} \circ \cdots \circ \sigma_t^{\ord_t(\psi)+n_t}\right )(s_f) \psi(u_f) + \tilde{h}\] such that \(\rk(\tilde{h}) < \rk(\psi(u_f)).\) We write \(\rho(\psi)\) for the operator \(\sigma_1^{\ord_1(\psi)+n_1} \circ \cdots \circ \sigma_t^{\ord_t(\psi)+n_t}.\) Note that \(\rk (\rho(\psi)(s_f)) <\rk (\psi(u_f)).\)
\end{remark}

To establish a division algorithm, we must first formalise what it means for one \(\D^*\)-polynomial to be reduced with respect to another.

\begin{definition}[Reduction]
Let \(f, g \in R\{\bar{x}\}_{\D^*}, f \notin R.\) We say \(g\) is reduced with respect to \(f\) if \emph{both} of the following hold
\begin{itemize}
    \item \(g\) is \emph{partially reduced} with respect to \(f;\) that is, \(g\) contains no \(\delta\)-transform of \(u_f;\)
    \item If \(v\) is a \(\sigma\)-transform of \(u_f\) that appears in \(g,\) then it appears with degree strictly less than \(\deg (f).\)
\end{itemize}
Note that if \(g \in R,\) then \(g\) is reduced with respect to any \(f \notin R.\) For \(A \subseteq R\{\bar{x}\}_{\D^*} \setminus R,\) we say that \(g\) is reduced with respect to \(A\) if \(g\) is reduced with respect to every element of \(A.\)
\end{definition}

Using this notion of reduction and our ranking of \(\D^*\)-polynomials, we now prove the \(\D^*\)-reduction lemma.

\begin{lemma}[\(\D^*\)-reduction lemma]\label{div_alg}
Let \(A \subseteq R\{\bar{x}\}_{\D^*} \setminus R.\) Then for any \(g \in R\{\bar{x}\}_{\D^*}\) there exist \(H,g_0 \in R\{\bar{x}\}_{\D^*}\) such that \(H\) is a product of \(\sigma\)-transforms of initials and separants of \(\D^*\)-polynomials in \(A,\) \(g_0\) is reduced with respect to \(A,\) \(\rk (g_0) \leq \rk (g),\) and \(H \cdot g \equiv g_0 \text{ mod} \, [A]_{\D}.\)    
\end{lemma}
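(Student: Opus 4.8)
The plan is to induct on the rank of $g$ using the well-ordering provided by the ranking. If $g \in R$, or more generally if $g$ is already reduced with respect to $A$, there is nothing to do: we take $H = 1$ and $g_0 = g$. Otherwise, $g$ fails reduction because of some $f \in A$, and the strategy is to perform a single reduction step that strictly lowers the rank, then invoke the inductive hypothesis on the resulting polynomial. The two failure modes in the definition of reduction correspond to two distinct reduction steps, and the main content is checking that each lowers $\rk$.

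\medskip
\emph{Partial reduction (eliminating $\delta$-transforms).} Suppose $g$ contains a $\delta$-transform $v = \psi(u_f)$ of some $u_f$ with $f \in A$, and choose $v$ of highest possible rank among all such transforms appearing in $g$. By \autoref{composition}, we may write $\psi(f) = \rho(\psi)(s_f)\,\psi(u_f) + \tilde{h}$ with $\rk(\tilde h) < \rk(\psi(u_f)) = \rk(v)$. Thus $\rho(\psi)(s_f)$ is (up to lower-rank terms) the coefficient that lets us eliminate $v$: writing $g = \sum_k c_k v^k$ with $u_{c_k} < v$, I multiply $g$ by a suitable power of the $\sigma$-transform $\rho(\psi)(s_f)$ and subtract an appropriate $R\{\bar x\}_{\D^*}$-multiple of $\psi(f) \in [A]_{\D}$ to cancel the top power of $v$. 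Since $\rho(\psi)(s_f)$ is a $\sigma$-transform of a separant, this is a legitimate contribution to $H$, and since we work over a $\Q$-algebra the relevant integer coefficients arising from \autoref{composition} are invertible. The result is congruent to a multiple of $g$ modulo $[A]_{\D}$ and has strictly smaller rank (the degree in $v$ drops, and no new variable above $v$ is introduced because $\tilde h$ and the $c_k$ all have rank below $v$).

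\medskip
\emph{Full reduction (lowering degree in $\sigma$-transforms).} Once $g$ is partially reduced, suppose some $\sigma$-transform $v = \tau(u_f)$ of $u_f$ (with $\ord^\delta(\tau)=0$) appears in $g$ with $\deg_v(g) \geq \deg(f) = d$. By \autoref{auto-comp}, $\tau(f) = \tau(I_f)\,\tau(u_f)^d + h$ with $\rk(h) < \rk(u_f^d)$, so $\tau(I_f)$ is a $\sigma$-transform of the initial $I_f$ playing the role of a leading coefficient. Multiplying $g$ by $\tau(I_f)$ and subtracting an $R\{\bar x\}_{\D^*}$-multiple of $\tau(f) \in [A]_{\D}$ lowers $\deg_v$ without raising rank; iterating drives $\deg_v$ below $d$. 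This contributes $\sigma$-transforms of initials to $H$. One must be careful that this step does not reintroduce a $\delta$-transform and spoil partial reduction, which holds because $\tau(f)$ is built only from $\tau$ (a $\sigma$-composition) applied to $f$ and so contains no $\delta$-transforms of $u_f$ beyond those already controlled.

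\medskip
The bookkeeping is the main obstacle: I must verify that each elimination step strictly decreases rank in the well-order, so that after finitely many steps (interleaving the two reductions, always attacking the highest offending transform) the process terminates with a fully reduced $g_0$ satisfying $\rk(g_0) \le \rk(g)$, while accumulating $H$ as a product of $\sigma$-transforms of initials and separants. The accompanying congruence $H \cdot g \equiv g_0 \bmod [A]_{\D}$ follows by tracking that every subtracted term lies in $[A]_{\D}$ and every multiplier folded into $H$ is of the prescribed form. The subtle point is ordering the reductions correctly: eliminating a high $\delta$-transform can produce new lower-rank transforms, so I would phrase the induction on the rank of $g$ (using that the ranking is a well-order) rather than on a naive count, invoking the inductive hypothesis on the strictly-lower-rank polynomial produced by one step and appending the finitely many new multipliers to $H$.
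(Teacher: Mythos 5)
Your overall strategy is the same as the paper's: attack the highest offending transform $v=\theta(u_f)$, use \autoref{composition} (resp.\ \autoref{auto-comp}) to write $\theta(f)$ as a $\sigma$-transform of the separant (resp.\ initial) times the top power of $v$ plus lower-rank terms, multiply $g$ by that $\sigma$-transform and subtract a multiple of $\theta(f)\in[A]_{\D}$, and accumulate the multipliers into $H$. However, there is a genuine gap in your induction measure. You propose to induct on $\rk(g)$, which in this paper means the pair $(u_g,\deg_{u_g}(g))$ where $u_g$ is the leader of $g$. The offending transform $v$ need not be the leader of $g$: $g$ may well have a leader of strictly higher rank that is not a transform of any $u_f$ with $f\in A$. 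In that case your elimination step leaves $u_g$ and $\deg_{u_g}(g)$ untouched, so $\rk$ does not strictly decrease and the induction does not close. Relatedly, your decomposition $g=\sum_k c_k v^k$ with $u_{c_k}<v$ implicitly assumes $v=u_g$; in general one can only write $g=g_1 v^k+g_2$ with $g_1$ free of $v$ and $\deg_v(g_2)<k$. The paper fixes this by introducing the \emph{$A$-leader} of $g$ (the highest-rank power $\theta(u_i)^k$ witnessing non-reducedness) and running a minimal-counterexample argument on the pair ($A$-leader rank, its degree); one reduction step demonstrably lowers that pair, which is exactly the well-founded measure your sketch needs.

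Two smaller points. First, your appeal to $R$ being a $\Q$-algebra is unnecessary: no integer coefficient is ever inverted, since one multiplies by $\rho(\theta)(s_f)$ rather than dividing by it, and indeed the reduction lemma in the paper carries no characteristic hypothesis. Second, your worry that the $\sigma$-transform phase might reintroduce $\delta$-transforms is dissolved by the paper's formulation: there is no two-phase process, just one case split on whether the current $A$-leader is a $\delta$-transform or a $\sigma$-transform, with the single measure handling both uniformly. With the induction measure corrected as above, your argument becomes essentially the paper's proof.
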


\begin{proof}
    If \(g\) is reduced with respect to \(A,\) then we can take \(g_0 = g\) and \(H = 1.\) Therefore, we can assume that \(g\) is not reduced with respect to \(A.\) Let \(u_i\) denote the leader of \(a_i \in A,\) \(d_i\) the degree of \(a_i,\) \(s_i\) the separant of \(a_i\) and \(I_i\) the initial of \(a_i.\) Then as \(g\) is not reduced with respect to \(A,\) it contains some power \(\theta(u_i)^k\) of a transform of some \(u_i,\) where \(\theta\) is a \(\Delta\)-composition. If \(\ord(\theta)=0,\) then \(k \geq d_i.\) Such a term of highest possible rank is called the \(A\)-leader of \(g.\)
    
    Let \(\Sigma\) be the set of all \(\D^*\)-polynomials for which the lemma does not hold. Suppose that \(\Sigma \neq \emptyset\) and let \(g \in \Sigma\) be such that its \(A\)-leader \(v\) has the lowest possible rank and appears with lowest degree amongst all \(\D^*\)-polynomials in \(\Sigma\) with \(A\)-leader \(v.\) Then there are two possible situations; either \(v\) is a \(\delta\)-transform of some \(u_i,\) or \(v\) is a \(\sigma\)-transform of some \(u_i\) and appears with greater than or equal degree than \(d_i.\) In both cases, we can write \(g = g_1 \cdot v^k + g_2\) where \(g_1\) does not contain \(v\) and \(\deg_v(g_2) < k.\) Furthermore, we know that \(v = \theta (u_i)\) for some leader of \(a_i \in A.\)

    Assume that we are in the first case; that is, \(v = \theta(u_i)\) for some \(u_i\) and \(\ord (\theta) > 0.\) Consider the \(\D^*\)-polynomial \(r = \rho(\theta)(s_i) g - g_1 \cdot v^{k-1} \theta (a_i).\) As \(g \in \Sigma,\) \(r\) cannot be reduced with respect to \(A.\) By \autoref{composition}, we have that \(\theta(a_i) = \rho(\theta)(s_i)\theta(u_i) + h\) for some \(h\) with lower rank than \(\theta(u_i).\) Thus we have
    \begin{align*}
        r &= \rho(\theta)(s_i) g - g_1 \cdot v^{k-1} \theta (a_i) \\
        &= \rho(\theta)(s_i)\left(g_1 \cdot v^k + g_2\right) - g_1 \cdot v^{k-1} \left( \rho(\theta(s_i)\theta(u_i) + h\right) \\
        &= \rho(\theta)(s_i)g_2 - v^{k-1}hg_1
    \end{align*}
    Note that \(g_1\) does not contain \(v,\) \(\deg_v(g_2) <k\) and the rank of \(h\) and \(\rho(\theta)(s_i)\) are less than the rank of \(v,\) hence \(r\) is a \(\D^*\)-polynomial with \(A\)-leader of rank less than or equal to \(v\) and \(\deg_v(r) < k.\) Thus \(r \notin \Sigma\) and there exist \(\tilde{H}, \tilde{g} \in R\{\bar{x}\}_{\D^*}\) such that \(\tilde{H}\) is a product of \(\sigma\)-transforms of initials and separants of \(\D^*\)-polynomials in \(A,\) \(\tilde{g}\) is reduced with respect to \(A,\) \(\rk(\tilde{g}) \leq \rk(r),\) and \(\tilde{H} \cdot r \equiv \tilde{g} \text{ mod} \, [ A ]_{\D}.\) Thus we have that
    \begin{align*}
        \tilde{H} \cdot \rho(\theta)(s_i) \cdot g &= \tilde{H} \cdot r + \tilde{H} \cdot g_1 \cdot v^{k-1} \theta(a_i) \\
        &\equiv \tilde{H} \cdot r \text{ mod} \, [ A]_{\D} \\
        &\equiv \tilde{g} \text{ mod} \, [A ]_{\D}
    \end{align*}
    This is a contradiction, thus if \(g \in \Sigma,\) we must be in the second case; i.e. the \(A\)-leader of \(v\) is a \(\sigma\)-transform of some \(u_i.\)
    
    Let \(v = \tau(u_i)\) for some \(a_i.\) Consider the \(\D^*\)-polynomial \(r = \tau(I_i) g - g_1 \cdot v^{k-{d_i}} \tau(a_i).\) By \autoref{auto-comp}, we have that \(\tau(a_i) = \tau(I_i)v^{d_i} + h\) where \(h\) has lower rank than \(\tau(u_i)^{d_i}.\) Thus we have
    \begin{align*}
        r &= \tau(I_i) g - g_1 \cdot v^{k-{d_i}} \tau(a_i) \\
        &= \tau(I_i) \left(g_1 v^k +f_2\right) - g_1 \cdot v^{k-{d_i}} \left(\tau(I_i)v^{d_i}+h\right) \\
        &= \tau(I_i)g_2 - g_1 \cdot v^{k-{d_i}} h
    \end{align*}
    As \(g_1\) does not contain \(v,\) \(\deg_v(g_2)<k,\) the rank of \(\tau(I_i)\) is less than the rank of \(v,\) and the rank of \(h\) is less than the rank of \(\tau(u_i)^{d_i},\) we have that \(r\) is a \(\D^*\)-polynomial with \(A\)-leader of rank less than or equal to \(v\) and \(\deg_v(r) < k.\) Thus \(r \notin \Sigma\) and there exist \(\tilde{H}, \tilde{g} \in R\{\bar{x}\}_{\D^*}\) such that \(\tilde{H}\) is a product of \(\sigma\)-transforms of initials and separants of \(\D^*\)-polynomials in \(A,\) \(\tilde{g}\) is reduced with respect to \(A,\) \(\rk (\tilde{g}) \leq \rk (r),\) and \(\tilde{H} \cdot r \equiv \tilde{g} \text{ mod} \, [ A ]_{\D}.\) Thus we have that
    \begin{align*}
        \tilde{H} \cdot \tau(I_i) \cdot g &= \tilde{H} \cdot r + \tilde{H} \cdot g_1 \cdot v^{k-{d_i}} \tau(a_i) \\
        &\equiv \tilde{H} \cdot r \text{ mod} \, [ A ]_{\D} \\
        &\equiv \tilde{g} \text{ mod} \, [A ]_{\D}
    \end{align*}
    This is a contradiction and so \(\Sigma = \emptyset ,\) as desired.
\end{proof}
\section{\(\D^*\)-basis theorem}\label{section-d-basis-theorem}

In this section, we prove the main result; i.e. the basis theorem for \(\D^*\)-polynomial rings in characteristic zero. Towards this, we first discuss perfect \(\D\)-ideals and prove they form a perfect conservative system. We then introduce the notion of a characteristic set of a prime \(\D\)-ideal. We carry forward the notation and assumptions from the previous section; in particular, \autoref{ass-1} and \autoref{ass-2}.

\subsection{Perfect \(\D^*\)-ideals}
 \begin{definition}[Perfect \(\D\)-ideals]
    A \(\D\)-ideal of a \(\D^*\)-ring \(R\) is reflexive if for any \(a \in R\) and \(1 \le i \le t,\) \(a\sigma_i(a) \in I\) implies \(a \in I\) (recall that the \(\sigma_i\) are the associated endomorphisms).
    A reflexive \(\D\)-ideal which is also a radical ideal (i.e. \(\sqrt{I} = I\)) is called a perfect \(\D\)-ideal.
\end{definition}

\begin{lemma}\label{perfect-form-lemma}
    Let \(I\) be a \(\D\)-ideal of a \(\D\)-ring \((R,e)\) with associated endomorphisms \(\sigma_0,\sigma_1,\ldots,\sigma_t.\) Then \(I\) is a perfect \(\D\)-ideal if and only if the following condition holds:
    \begin{equation}\label{perfect-form}
        \tau_1(a)^{n_1} \cdots \tau_r(a)^{n_r} \in I \quad \implies \quad a \in I
    \end{equation}
    where \(n_j \in \N\) and \(\tau_j\) is a (possibly trivial) composition of \(\Delta\)-operators such that \(\ord^{\delta} (\tau_j) = 0.\)
\end{lemma}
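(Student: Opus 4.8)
The plan is to prove both implications, with nearly all of the work going into showing that a perfect \(\D\)-ideal satisfies (\ref{perfect-form}).

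The reverse implication is immediate. Assuming (\ref{perfect-form}), radicality follows by taking \(r=1\) and \(\tau_1=\id\) (so \(a^n \in I\) forces \(a \in I\)), and reflexivity follows by taking \(r=2\), \(\tau_1 = \id\), \(\tau_2 = \sigma_i\), and \(n_1=n_2=1\) (so \(a\sigma_i(a) \in I\) forces \(a \in I\)); since \(I\) is already assumed to be a \(\D\)-ideal, it is therefore perfect.

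For the forward implication, suppose \(I\) is a perfect \(\D\)-ideal and \(\tau_1(a)^{n_1}\cdots\tau_r(a)^{n_r} \in I\) with each \(\tau_j\) a composition of associated endomorphisms (\(\ord^{\delta}(\tau_j)=0\)). First I would clear exponents: with \(M = \max_j n_j\), the element \((\tau_1(a)\cdots\tau_r(a))^M\) is a multiple of \(\tau_1(a)^{n_1}\cdots\tau_r(a)^{n_r}\), hence lies in \(I\), and radicality gives \(\tau_1(a)\cdots\tau_r(a) \in I\); so we may assume every \(n_j=1\). The heart of the argument is the following sub-lemma, which uses only reflexivity: for \(P,Q \in R\) and any associated endomorphism \(\sigma_i\), if \(\sigma_i(P)\,Q \in I\) then \(P\,Q \in I\). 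This follows from the identity
\[
(PQ)\,\sigma_i(PQ) = \big(\sigma_i(P)\,Q\big)\big(P\,\sigma_i(Q)\big),
\]
whose right-hand side lies in \(I\) because \(\sigma_i(P)Q \in I\) and \(I\) is an ideal; reflexivity applied to \(PQ\) then yields \(PQ \in I\).

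Finally I would induct on the total number \(N\) of associated-endomorphism factors occurring across \(\tau_1,\dots,\tau_r\). If \(N=0\) then every \(\tau_j=\id\) and \(a^r \in I\), so radicality gives \(a \in I\). If \(N>0\), pick a nontrivial \(\tau_j\) and let \(\sigma_i\) be its outermost factor; setting \(S = \{\, j : \tau_j \text{ has outermost factor } \sigma_i \,\}\) and writing \(\tau_j = \sigma_i \circ \tau_j'\) for \(j \in S\), multiplicativity of \(\sigma_i\) gives \(\prod_{j}\tau_j(a) = \sigma_i(P)\,Q\), where \(P = \prod_{j \in S}\tau_j'(a)\) and \(Q = \prod_{j \notin S}\tau_j(a)\). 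The sub-lemma produces \(PQ \in I\), which is again a product of compositions of associated endomorphisms applied to \(a\) but with strictly smaller total count (one \(\sigma_i\) has been removed from each of the \(|S| \ge 1\) factors indexed by \(S\)), so the induction hypothesis delivers \(a \in I\). The main obstacle is spotting the sub-lemma and its identity \((PQ)\sigma_i(PQ) = (\sigma_i(P)Q)(P\sigma_i(Q))\), which is exactly what converts an arbitrary ``\(\sigma_i\)-headed'' factor of a product into an instance of reflexivity; everything else (clearing exponents, and peeling off one endomorphism layer at a time in the induction) is routine bookkeeping. I note that this argument requires neither commutativity of the operators, injectivity of the \(\sigma_i\) (\autoref{ass-2}), nor characteristic zero --- only that \(I\) is a radical, reflexive \(\D\)-ideal and that each \(\sigma_i\) is a ring endomorphism.
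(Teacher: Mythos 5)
Your proposal is correct and is essentially the paper's own argument: the paper likewise handles the easy direction by specialization, and for the forward direction multiplies \(\tau_1(a)^{n_1}\cdots\tau_r(a)^{n_r}=\sigma_i(P)Q\) by \(P\,\sigma_i(Q)\) to produce \((PQ)\sigma_i(PQ)\in I\), strips the endomorphisms one layer at a time via reflexivity, and finishes with radicality. Your version merely packages the same identity as an explicit sub-lemma, clears the exponents first, and runs a cleaner induction on the total number of endomorphism factors; the observation that neither commutativity, injectivity, nor characteristic zero is needed is also accurate.
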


\begin{proof}
    As \(a\sigma_i(a)\) and \(a^n\) are instances of \(\tau_1(a)^{n_1} \cdots \tau_r(a)^{n_r}, \) if \((\ref{perfect-form})\) holds for a \(\D\)-ideal \(I,\) then \(I\) is a perfect \(\D\)-ideal.
    
    If \(I\) is a perfect \(\D\)-ideal such that \(\tau_1(a)^{n_1} \cdots \tau_r(a)^{n_r} \in I,\) we can write \(\tau_1\) as \(\sigma_i \circ \tau_1'\) (after potentially reordering terms). Multiplying \(\tau_1(a)^{n_1} \cdots \tau_r(a)^{n_r}\) by \(\tau_1'(a)^{n_1} \sigma_i \left(\tau_2(a)^{n_2} \cdots \tau_r(a)^{n_r}\right),\) we obtain \(\tau_1'(a)^{n_1} \cdots \tau_r(a)^{n_r} \in I\) as \(I\) is a perfect \(\D\)-ideal. Repeating this process, we can remove instances of \(\sigma_i\) one at a time and remain within \(I.\) Thus we end up with \(a^n \in I\) for some \(n \in \N,\) and so \(a\in I\) as \(I\) is a radical ideal.
    
\end{proof}
\begin{remark}\label{perfect-ideal-corr}
    In \cite{levin08}, a perfect difference ideal is defined an ideal \(I\) closed under the operators \(\sigma_i\) such that \[\tau_1(a)^{n_1} \cdots \tau_r(a)^{n_r} \in I \quad \implies \quad a \in I\] where \(\tau_j\) is a (possibly trivial) composition of endomorphisms. In the case \(\D = K \times \cdots \times K\) and with the restriction that our operators commute and are injective, our definition of a perfect \(\D\)-ideal agrees with the notion of a perfect difference ideal by \autoref{perfect-form-lemma}.
\end{remark}
\begin{remark}\label{differential-ideals}
    In the case \(\D = K [\epsilon]/(\epsilon)^2\) and with the restriction that \(\delta_0 = \id_R\) (i.e. \(R\) is a differential ring), perfect \(\D\)-ideals correspond to radical differential ideals.
\end{remark}

Let \(S \subseteq R.\) We denote by \(\langle S \rangle_{\D}\) the smallest reflexive \(\D\)-ideal of \(R\) containing \(S.\) Similarly, we denote by \(\{S\}_{\D}\) the smallest perfect \(\D\)-ideal of \(R\) containing \(S.\) The ideal \(\{S\}_{\D}\) can be obtained from \(S\) via the following procedure (similar to the procedure called shuffling, found in \cite{levin08}). For any set \(M \subseteq R,\) let \(M'\) denote the set of all \(a \in R\) such that \(\tau_1(a)^{k_1}\cdots\tau_r(a)^{k_r} \in M\) for some \(\tau_1,\ldots, \tau_r\) compositions of \(\sigma_1,\ldots,\sigma_t\) and some \(k_1,\ldots,k_r \in \N.\) Let \(S_0 = S\) and inductively define \(S_{k+1} = [ S_k]_{\D}'.\) Clearly, \(S=S_0 \subseteq \{S\}_{\D}.\) The inclusion \(S_k \subseteq \{S\}_{\D}\) implies \([ S_k]_{\D} \subseteq \{S\}_{\D}\) and \(S_{k+1} = [ S_k]_{\D}' \subseteq \{S\}_{\D},\) since the \(\D\)-ideal \(\{S\}_{\D}\) is perfect. By induction \(S_k \subseteq \{S\}_{\D}\) for all \(k= 0,1,\ldots,\) hence \(\bigcup_{i=0}^{\infty} S_i \subseteq \{S\}_{\D}.\) By construction, we have that \(\bigcup_{i=0}^{\infty} S_i\) is a perfect \(\D\)-ideal of \(R,\) so it should contain \(\{S\}_{\D}.\) Thus \(\{S\}_{\D} = \bigcup_{i=0}^{\infty} S_i.\)

\medskip
We now use ranked bases (\autoref{ranked-basis}) to prove that the set of perfect \(\D\)-ideals is a perfect conservative system.

\begin{lemma}\label{rdcs}
Let \(R\) be a \(\D^*\)-ring with operators \(\Delta.\) Then the set of perfect \(\D\)-ideals of \(R\) forms a perfect conservative system.
\end{lemma}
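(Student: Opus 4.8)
The plan is to verify, in turn, the four defining features: that the set $\C$ of perfect $\D$-ideals is closed under arbitrary intersections and under unions of chains (the two conservative-system axioms), that every member is radical, and that $\C$ is divisible. Radicality is immediate from the definition of a perfect $\D$-ideal, and I will use the characterisation of \autoref{perfect-form-lemma} throughout, namely that a $\D$-ideal $I$ is perfect precisely when $\tau_1(a)^{n_1}\cdots\tau_r(a)^{n_r}\in I$ forces $a\in I$ for $\sigma$-compositions $\tau_j$. Note that no characteristic assumption is needed for this lemma; the hypothesis that $R$ be a $\Q$-algebra only enters later in the basis theorem.

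The two closure axioms are routine. For an arbitrary intersection $J=\bigcap_\lambda I_\lambda$ of perfect $\D$-ideals, $J$ is again a radical $\D$-ideal, and if $\tau_1(a)^{n_1}\cdots\tau_r(a)^{n_r}\in J$ then this product lies in each $I_\lambda,$ so $a\in I_\lambda$ for every $\lambda$ and hence $a\in J$; thus $J\in\C$ by \autoref{perfect-form-lemma}. For the union $U=\bigcup_\lambda I_\lambda$ of a chain, any element (or finite collection of elements) of $U$ already lies in a single $I_\lambda,$ so the $\D$-ideal, radical, and perfect-form conditions each transfer from that $I_\lambda$ to $U.$

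The substance of the proof is divisibility: given $I\in\C$ and $s\in R,$ I must show $I:s\in\C.$ That $I:s$ is a radical ideal is standard commutative algebra (if $x^n s\in I$ then $(xs)^n=x^n s^n\in I,$ so $xs\in I$). The key step is a colon lemma: if $xs\in I$ then $\delta_{i,j}(x)\,s\in I$ for every operator $\delta_{i,j}\in\Delta$ (writing $\sigma_i=\delta_{i,0}$). I would prove this by induction on $\nu_i(j)$ within each local component. For the base case $\sigma_i,$ setting $a=\sigma_i(x)s$ one computes $a\,\sigma_i(a)=\big(s\,\sigma_i^2(x)\big)\,\sigma_i(xs)\in I,$ so reflexivity of $I$ gives $a\in I.$ For the inductive step I would apply $\delta_{i,j}$ to $xs,$ use the ranked product rule of \autoref{prod_rule}
\[\delta_{i,j}(xs)=\delta_{i,j}(x)\sigma_i(s)+\sigma_i(x)\delta_{i,j}(s)+\sum_{(p,q)\in\gamma_i(j)}\alpha_{i,j}^{p,q}\,\delta_{i,p}(x)\delta_{i,q}(s),\]
and multiply through by $s\,\sigma_i(\delta_{i,j}(x)).$ Writing $y=\delta_{i,j}(x),$ the leading term becomes $(ys)\,\sigma_i(ys)$; the term coming from $\sigma_i(x)\delta_{i,j}(s)$ acquires the factor $\sigma_i(x)s\in I$ (the base case), and each term in the sum acquires a factor $\delta_{i,p}(x)s$ with $\nu_i(p)<\nu_i(j),$ which lies in $I$ by the inductive hypothesis. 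Hence $(ys)\,\sigma_i(ys)\in I,$ and reflexivity yields $\delta_{i,j}(x)s=ys\in I.$

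With the colon lemma in hand, $I:s$ is closed under every operator, so it is a $\D$-ideal; it is radical; and it is reflexive, since $a\sigma_i(a)\in I:s$ gives $(as)\,\sigma_i(as)=\big(a\sigma_i(a)s\big)\sigma_i(s)\in I,$ whence $as\in I$ by reflexivity of $I,$ i.e. $a\in I:s.$ Therefore $I:s$ is a perfect $\D$-ideal and $\C$ is divisible. I expect the colon lemma---specifically arranging the multiplier so that the twisted product rule produces the reflexive pattern $a\sigma_i(a)$ modulo terms already known to lie in $I$---to be the main obstacle; once the correct multiplier $s\,\sigma_i(\delta_{i,j}(x))$ is identified, the induction on $\nu_i(j)$ closes cleanly because $\gamma_i(j)$ only involves operators of strictly smaller order within the same component.
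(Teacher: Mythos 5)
Your proof is correct and follows essentially the same route as the paper's: the same induction on $\nu_i(j)$ using the ranked product rule of \autoref{prod_rule}, the same multiplier $s\,\sigma_i(\delta_{i,j}(x))$ to produce the reflexive pattern, and the same verification of reflexivity of $I:s.$ If anything you are slightly more careful, since you explicitly prove the auxiliary fact $\sigma_i(x)s\in I$ as a base case, whereas the paper asserts $\sigma_i(x)\in I:s$ without spelling out that short reflexivity computation.
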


\begin{proof}
Clearly, the intersection of any set of perfect \(\D\)-ideals is again a perfect \(\D\)-ideal, and the union of any non-empty set of perfect \(\D\)-ideals, totally ordered by inclusion, is again a perfect \(\D\)-ideal. It remains to show that the system is divisible; that is, for any perfect \(\D\)-ideal \(I,\) and any \(s \in R,\) \(I : s = \{x \in R \mid xs \in I\}\) is a perfect \(\D\)-ideal.

Recall that \(\Delta = \{\sigma_1, \delta_{1,1}, \ldots \delta_{1,m_1}, \ldots,\sigma_t, \delta_{t,1}, \ldots \delta_{t,m_t}\}\) and that for each \(1 \le i \le t, 1 \le j < k \le m_i,\) we have \(\nu_i(j) \le \nu_i(k).\) Recall that by \autoref{prod_rule}, the \(\Delta\)-operators satisfy the following product rule for all \(r,s \in R,\)
\[\delta_{i,j}(rs) = \delta_{i,j} (r) \sigma_i(s) + \sigma_i(r) \delta_{i,j} (s) + \sum_{(p,q) \in \gamma_i (j)} \alpha_{i,j}^{p,q} \delta_{i,p} (r) \delta_{i,q} (s)\] where \(\gamma_i (j) := \{(p,q) \mid 1 \leq p, q \leq m_i, \nu_i(p) + \nu_i(q) \le \nu_i(j)\}\) and \(\alpha_{i,j}^{p,q}\) is the coefficient of \(\epsilon_{i,j}\) in the product \(\epsilon_{i,p} \cdot \epsilon_{i,q}\) in \(\D_i.\)

Let \(I\) be a perfect \(\D\)-ideal of \(R\) and let \(s \in R.\) Let \(x \in I:s.\) Then as \(I\) is a \(\D\)-ideal, we have that \[\delta_{i,1}(xs) = \delta_{i,1}(x) \sigma_i(s) + \sigma_i(x) \delta_{i,1}(s) + \sum_{(p,q) \in \gamma_i(1)} \alpha_{i,1}^{p,q} \delta_{i,p}(x) \delta_{i,q}(s) \in I.\]

As \(\nu_i(1) \ge 1,\) we must have that \(\gamma_i(1) = \emptyset.\) Thus the above expression becomes \(\delta_{i,1}(x) \sigma_i(s) + \sigma_i(x) \delta_{i,1}(s) \in I.\) Multiplying this expression by \(s\sigma_i(\delta_{i,1}(x)),\) and as \(\sigma_i(x) \in I: s,\) we obtain \(\sigma_i(\delta_{i,1}(x))\delta_{i,1}(x)\sigma_i(s)s \in I.\) Thus as \(I\) is reflexive, \(\delta_{i,1}(x)s \in I,\) and so \(\delta_{i,1}(x) \in I:s.\)

Assume that for any \(x \in I:s,\) \(\delta_{i,p}(x) \in I:s\) for \(1 \le p \le m_i\) with \(\nu_i(p) < \nu_i(j).\)
Then we have that 
\[\delta_{i,j}(xs) = \delta_{i,j}(x) \sigma_i(s) + \sigma_i(x) \delta_{i,j}(s) + \sum_{(p,q) \in \gamma_i(j)} \alpha_{i,j}^{p,q} \delta_{i,p}(x) \delta_{i,q}(s) \in I.\]
As \((p,q) \in \gamma_i(j) = \{(p,q) \mid 1 \le p,q \le m_i, \nu_i(p) + \nu_i(q) \le \nu_i(j)\}\) and \(\nu_i(p) \ge 1,\) for every \(p \ge 1,\) we have that \(\nu_i(p) < \nu_i(j).\) Thus as \(\sigma_i(x) \in I:s,\) and \(\delta_{i,p}(x) \in I:s\) for every \(\delta_{i,p}\) appearing in the third term, we can multiply the expression by \(s\sigma_i(\delta_{i,j}(x))\) to obtain \(\sigma_i(\delta_{i,j}(x))\delta_{i,j}(x) \sigma_i(s) s \in I.\) As before, we obtain \(\delta_{i,j}(x) s \in I,\) and so \(\delta_{i,j}(x) \in I:s.\) By this inductive argument on \(\nu_i(j),\) we have shown \(I:s\) is a \(\D\)-ideal.

Since \(I\) is radical, \(I: s\) is radical. Suppose that \(x \cdot \sigma_i(x) \in I:s.\) Then \(x \sigma_i(x) s \in I,\) so \(x \sigma_i(x) s \sigma_i(s) \in I.\) Thus as \(I\) is reflexive, \(x s \in I,\) so \(x \in I:s,\) and \(I:s\) is a reflexive ideal.

Therefore \(I:s\) is a perfect \(\D\)-ideal and hence the set of all perfect \(\D\)-ideals of \(R\) form a perfect conservative system.
\end{proof}

\begin{remark}\label{poisson-applies}
Taking \(\C\) to be the conservative system consisting of perfect \(\D\)-ideals, we see that the ideal \(\{S\}_{\D}\) coincides with \((S)_{\C}.\) Thus we can rephrase the assumption in \autoref{poisson-thm} as follows: For every prime perfect \(\D\)-ideal, \(P,\) there exists a finite \(\Sigma \subset P\) and \(s \in R \setminus P\) such that \(P = \{\Sigma\}_{\D} : s.\)
\end{remark}

\subsection{Characteristic sets}

Recall that \((R,e)\) is a \(\D^*\)-ring, \(\bar{x} = \{x_1,\ldots,x_n\}\) and \((R\{\bar{x}\}_{\D^*},e)\) is the \(\D^*\)-polynomial ring over \((R,e)\) in variables \(\bar{x}.\) Additionally, recall that \(\bar{\epsilon}\) is a ranked basis for \(\D,\) with \(M = | \bar{\epsilon}|.\)

\begin{definition}[Autoreduced Set]
    Let \(A \subseteq R\{\bar{x}\}_{\D^*} \setminus R.\) We say that \(A\) is \textit{autoreduced} if for all \(f,g \in A,\) \(f\) is reduced with respect to \(g.\)
\end{definition}

\begin{proposition}\label{auto-red-prop}
    Let \(A\) be an autoreduced set. Then we have the following:
    \begin{itemize}
        \item For any \(f,g \in A,\) \(u_f \neq u_g;\)
        \item \(A\) is a finite set;
        \item \(A\) can be written as \(\{a_1,a_2, \ldots, a_n\}\) where \(\rk (a_i) < \rk (a_j)\) for \(i < j.\)
    \end{itemize}
\end{proposition}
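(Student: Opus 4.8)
The plan is to establish the three claims in order, with the second (finiteness) being the main content and relying on a combinatorial argument. The first claim is essentially immediate from the definition of reduction: if $f, g \in A$ with $f \neq g$ shared a common leader $u_f = u_g$, then taking $v = u_f$ (a trivial $\sigma$-transform of $u_g$), either $f$ would contain $u_g$ with degree $\geq \deg(g)$, violating reducedness of $f$ with respect to $g$, or symmetrically $g$ would fail to be reduced with respect to $f$. Since at least one of $\deg_{u_f}(f) \geq \deg(g)$ or $\deg_{u_g}(g) \geq \deg(f)$ must hold when the leaders coincide, autoreducedness forces $u_f \neq u_g$ for distinct elements.

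For finiteness, the key observation is that distinct elements of $A$ have distinct leaders, so it suffices to bound the number of possible leaders. Each leader is an element $d^\theta x_j$ of $\bar{x}_{\D^*}$, and I would argue that the set of leaders forms an antichain under the divisibility-style partial order induced by taking transforms: if $u_f = \theta(u_g)$ were a transform of $u_g$ (with $\theta$ a nontrivial $\Delta$-composition), then $f$ would contain a transform of $u_g$, contradicting that $f$ is reduced with respect to $g$ (a $\delta$-transform violates partial reducedness, while a nontrivial $\sigma$-transform appearing at all would need checking against the degree condition, but since $u_f$ appears in $f$ with positive degree and is itself $\sigma(u_g)$-type, reducedness is again violated). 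Thus no leader is a transform of another. Since the indeterminates $d^\theta x_j$ are indexed by $(j, \theta) \in \{1,\ldots,n\} \times \N^M$ and the transform relation corresponds to coordinatewise domination on the $\N^M$ component (for fixed $j$), the set of leaders restricted to each variable $x_j$ is an antichain in $(\N^M, \leq)$. By Dickson's Lemma, every antichain in $\N^M$ is finite, and since there are finitely many variables $x_j$, the total set of leaders is finite; hence $A$ is finite.

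The third claim is then purely organisational: since $A$ is finite and any two distinct elements have distinct leaders, the rank pre-order $\rk$ restricts to a \emph{total} order on $A$ (distinct leaders are comparable under the underlying well-order on $\bar{x}_{\D^*}$, so the case $u_{a_i} = u_{a_j}$ with differing degrees never arises among distinct elements). We may therefore enumerate $A = \{a_1, \ldots, a_n\}$ with $\rk(a_i) < \rk(a_j)$ whenever $i < j$.

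The main obstacle I anticipate is the finiteness argument, specifically ensuring that the antichain condition is correctly extracted from the definition of reduction. One must be careful that reducedness rules out \emph{both} $\delta$-transforms (via partial reducedness) and the relevant $\sigma$-transforms (via the degree condition); the cleanest route is to verify that $u_f$ being any nontrivial transform of $u_g$ contradicts reducedness, thereby reducing the problem to an application of Dickson's Lemma on $\N^M$. The other steps are routine once this antichain structure is in hand.
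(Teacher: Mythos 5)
There is a genuine gap in your finiteness argument: the set of leaders of an autoreduced set is \emph{not} an antichain in $\N^M$, so the single application of Dickson's Lemma you propose does not go through. The definition of reduction only forbids $\delta$-transforms of $u_g$ from appearing in $f$ outright; a nontrivial $\sigma$-transform of $u_g$ is permitted to appear in $f$, and even to be the leader of $f$, provided it occurs with degree strictly less than $\deg(g)$. Your parenthetical ``since $u_f$ appears in $f$ with positive degree \ldots\ reducedness is again violated'' is exactly where this breaks: positive degree is not enough, only degree $\geq \deg(g)$ violates the condition. Concretely, in the difference case $\D = K^m$ the set $\{x^3,\, \sigma(x)^2,\, \sigma^2(x)\}$ is autoreduced, and its leaders $x < \sigma(x) < \sigma^2(x)$ form a chain under coordinatewise domination, not an antichain. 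So finiteness cannot be deduced from ``no leader is a transform of another.''

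What is true, and what the paper's proof exploits, is a stratified version of your idea: leaders may dominate one another along the $\sigma$-coordinates, but whenever $u_f$ is a transform of $u_g$ the degree $\deg(f)$ must drop strictly below $\deg(g)$. The paper therefore fixes a minimal exponent $\theta$ (Dickson's Lemma gives finitely many minimal ones), looks at the set $W_\theta$ of leader-exponents dominating $\theta$, and partitions $W_\theta$ by the degree of the corresponding element of $A$; each stratum is contained in a finite union of sets of minimal elements, so a finite descending induction on the degree (which is bounded by $\deg$ of the element with leader $d^\theta x_i$) shows $W_\theta$ is finite. Your proof needs this extra induction on degree, with Dickson's Lemma applied repeatedly at each stage, rather than a single antichain argument. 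Your treatment of the first and third bullet points is fine and matches the paper's.
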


\begin{proof}
    Let \(f,g \in A.\) If \(f\) and \(g\) have the same leader, it must appear in \(f\) with degree strictly less than \(\deg(g)\) and in \(g\) with degree strictly less than \(\deg (f).\) This is impossible. Thus for any \(f,g \in A,\) \(u_f \neq u_g.\)

    Let \(V\) be the set of variables that occur as the leader of some \(a \in A.\) We can partition \(V\) into finitely many sets \(V_i\) such that \(v \in V_i\) if and only if \(v = d^{\theta}x_i\) for some tuple of natural numbers \(\theta.\)

    Let \(\Theta_i = \{\theta \mid d^{\theta}x_i \in V_i\},\) and \(\Theta_i^*\) be the set of minimal elements of \(\theta\) under the product ordering on \(\N^M.\) By Dickson's Lemma \cite{Figueira_2011}, we have that \(\Theta_i^*\) is a finite set. Define \(W_{\theta} = \{\theta' \in \Theta_i \mid \theta < \theta' \}.\) Clearly, \(\Theta_i = \bigcup_{\theta \in \Theta_i^*} W_{\theta}.\)

    Suppose that \(d^{\theta}x_i = u_a\) and \(\deg (a) = n.\) Then if \(\theta' \in W_{\theta},\) the corresponding variable \(d^{\theta'}x_i\) can appear with degree at most \(n-1\) in any element of \(A\) by definition of reduction. Thus we can partition \(W_{\theta}\) into a finite number of sets \(X_j = \{\theta' \in W_{\theta} \mid d^{\theta'}x_i = u_b, b \in A, \deg(b) = j\}.\) It is clear to see by the definition of reduction that \(X_{n-1}\) is a subset of the minimal elements of \(W_{\theta'}.\) Thus again by Dickson's Lemma, \(X_{n-1}\) is finite. For any \(1\le j \le n-2,\) we see that \[X_j \subseteq \min (W_{\theta'}) \cup \bigcup_{\substack{\psi \in X_k \\ k>j}} \min(W_{\psi}).\] As \(X_{n-1}\) is finite, using Dickson's Lemma repeatedly, we see that \(X_j\) is finite for all \(1 \le k \le n-1.\)

    We have now shown that for each \(\theta,\) \(W_{\theta}\) is a finite union of finite sets and is thus finite. Thus for each \(i,\) \(\Theta_i\) is finite. As the leaders of distinct polynomials in \(A\) are distinct, \(\Theta_i\) is in bijection with \(V_i.\) Thus \(V\) is the finite union of the \(V_i\)'s, which are finite, and so is finite.
     
    As the leaders of elements of \(A\) are distinct, the ranking is total on \(A.\) As \(A\) is finite, we can write \(A\) as \(\{a_1, a_2, \ldots, a_n\}\) where \(\rk (a_i) < \rk (a_j)\) for \(i <j.\)
\end{proof}

\begin{definition}[Pre-order on Autoreduced Sets]
Given two autoreduced sets \(A=\{a_1 , \ldots , a_k\}\) and \(B=\{b_1 , \ldots,b_l\},\) we say that \(A \prec B\) if both:
\begin{itemize}
    \item for some \(i \le k,\) \(\rk (a_i) < \rk (b_i)\) and \(\rk(a_j) = \rk(b_j)\) for \(j <i;\)
    \item \(l<k\) and for all \(i \le l,\) \(\rk(a_i)= \rk( b_i).\)
\end{itemize}
This defines a pre-order on autoreduced sets. 
\end{definition}

\begin{lemma}
Let \(\mathcal{B}\) be a non-empty set of autoreduced sets in \(R\{\bar{x}\}_{\D^*}.\) Then \(\mathcal{B}\) has a minimal element with respect to the above pre-order.
\end{lemma}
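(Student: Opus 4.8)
The plan is to prove the stronger statement that the pre-order $\prec$ is well-founded on autoreduced sets, from which the existence of a minimal element of any non-empty $\mathcal{B}$ is immediate. First I would record that the ranks of $\D^*$-polynomials themselves form a well-ordered set: a rank is the pair $(u_f,\deg f)\in \bar{x}_{\D^*}\times \N$, and since the fixed ranking is a well-order on $\bar{x}_{\D^*}$ and $\N$ is well-ordered, the lexicographic order on such pairs is again a well-order; call this ordered set $W$. Using \autoref{auto-red-prop} every autoreduced set $A=\{a_1,\dots,a_k\}$ may be written with $\rk(a_1)<\cdots<\rk(a_k)$, so it corresponds to a strictly increasing finite tuple in $W$; unwinding the definition, $A\prec B$ holds precisely when the tuple of $A$ is lexicographically below that of $B$, or the tuple of $B$ is a proper initial segment of that of $A$. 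I would dispose of the empty autoreduced set at the outset, since it is $\prec$-maximal and hence can be ignored unless it is the only element of $\mathcal{B}$.

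Next I would build a candidate minimal element greedily. Set $\mathcal{B}^{(0)}=\mathcal{B}$, and having chosen a non-empty $\mathcal{B}^{(j-1)}$, if it contains a set of length at least $j$, let $r_j$ be the least rank occurring in position $j$ among such sets (this minimum exists as $W$ is well-ordered), and let $\mathcal{B}^{(j)}$ be those sets in $\mathcal{B}^{(j-1)}$ of length at least $j$ whose $j$-th rank equals $r_j$. Because ranks strictly increase within a single autoreduced set, one gets $r_1<r_2<\cdots$. If the construction halts at some stage $J$ — meaning every element of $\mathcal{B}^{(J)}$ has length exactly $J$ — then any $A^*\in\mathcal{B}^{(J)}$ is minimal: a hypothetical $B\prec A^*$ would either first beat the tuple $(r_1,\dots,r_J)$ at some coordinate $i\le J$, placing $B$ in $\mathcal{B}^{(i-1)}$ as a candidate at step $i$ and contradicting the minimal choice of $r_i$, or else properly extend this tuple, contradicting that $\mathcal{B}^{(J)}$ contains no set of length exceeding $J$.

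The heart of the argument, and the step I expect to be the main obstacle, is ruling out non-termination. If the construction never halts, it produces an infinite strictly increasing sequence of ranks $r_p=(u_p,d_p)$ with all leaders $u_p=d^{\theta_p}x_{c_p}$ distinct, together with autoreduced sets $A^{[p]}\in\mathcal{B}^{(p)}$ in which, for every $p<q$, the $p$-th and $q$-th elements have leaders $u_p$ and $u_q$; since $A^{[q]}$ is autoreduced, its $q$-th element is reduced with respect to its $p$-th element. To contradict this I would run a Dickson-type argument that also tracks the degree: as $\{1,\dots,n\}$ is finite and $\N^{M}$ and $\N$ carry the product order, Dickson's Lemma \cite{Figueira_2011} yields indices $p<q$ with $c_p=c_q$, $\theta_p\le\theta_q$ componentwise, and $d_p\le d_q$. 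Then $u_q$ is a non-trivial transform $\tau(u_p)$, with $\tau$ determined by $\theta_q-\theta_p\neq 0$. If $\ord^{\delta}(\theta_q-\theta_p)>0$ then $u_q$ is a $\delta$-transform of $u_p$ occurring in the $q$-th element, violating partial reducedness; if $\ord^{\delta}(\theta_q-\theta_p)=0$ then $u_q$ is a $\sigma$-transform of $u_p$, so reducedness forces $d_q<d_p$, contradicting $d_p\le d_q$.

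Either case yields a contradiction, so the greedy construction must terminate and the second paragraph supplies the required minimal element. The delicate point throughout is the interaction between $\sigma$-transforms — which permit a leader to be a transform of a smaller leader provided its degree strictly drops — and $\delta$-transforms; folding the degree coordinate into the well-quasi-order argument is exactly what neutralises the $\sigma$-case, and without it one would only obtain the weaker conclusion $d_q<d_p$ and have to iterate. I would therefore emphasise this degree bookkeeping as the crux of the proof.
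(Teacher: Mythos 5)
Your proof is correct and follows essentially the same route as the paper: a greedy, position-by-position minimisation of ranks, with a minimal element extracted once the construction halts. The only difference is that you inline an explicit Dickson's Lemma argument on leaders and degrees to rule out non-termination, whereas the paper appeals directly to the finiteness of autoreduced sets (\autoref{auto-red-prop}) --- which is itself proved by the same Dickson-type bookkeeping --- so the combinatorial content is identical, and your version merely makes the paper's terse ``infinite sequence of mutually reduced variables'' step fully explicit.
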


\begin{proof}
    This can be proved as in Levin \cite{levin08} but we provide details. Let \(\mathcal{B}_1\) be the set of autoreduced sets \(\mathcal{A} \in \mathcal{B}\) such that \(|\mathcal{A}| \ge 1\) whose first element is of the lowest possible rank. Inductively define \(\mathcal{B}_{n+1}\) by taking the elements of \(\mathcal{A} \in \mathcal{B}_n\) with \(|\mathcal{A}| \ge n\) whose \((n+1)^{th}\) element is of the lowest possible rank. Let \(v_i\) be the leader of the \(i^{th}\) polynomial of any autoreduced set in \(\mathcal{B}_i.\) If every \(\mathcal{B}_i\) were non-empty, then we would have an infinite sequence of variables that are reduced with respect to one another. As autoreduced sets are finite, there is some \(\mathcal{B}_{n+1} = \emptyset.\) Any autoreduced set in \(\mathcal{B}_n\) is minimal with respect to this pre-order. Thus \(\mathcal{B}\) contains a minimal element.
\end{proof}

\begin{definition}[Characteristic Sets]\label{char-def}
    Let \(I\) be an ideal of \(R\{\bar{x}\}_{\D^*}.\) Let \(\mathcal{B}_I\) be the collection of autoreduced sets \(A \subset I\) such that for every \(f \in A,\) \(s_f \notin I.\) We say that a minimal element of \(\mathcal{B}_I\) is a characteristic set of \(I.\) Note that as the empty set is an autoreduced set contained in \(I,\) \(\mathcal{B}_I\) is always non-empty.
\end{definition}

Given any ideal \(I\) of \(R\{\bar{x}\}_{\D^*},\) we can construct a characteristic set \(A\) of \(I\) by the following procedure. Let \(B = \{f \in I \setminus R \mid s_f \notin I\}.\) If  \(B = \emptyset,\) then we may take \(A = \emptyset.\) Otherwise, take \(f_1\) to be any polynomial of minimal rank in \(B.\) Let \(A_1 =\{f_1\}.\)

For \(n>0,\) define recursively \(B_n = \{f \in B \mid f\) is reduced with respect to \(A_n,\) \(s_f \notin I\}.\) If \(B_n = \emptyset,\) take \(A=A_n.\) Take \(f_{n+1}\) to be any polynomial of minimal rank and degree in \(B_n.\) Let \(A_{n+1} = A_n \cup \{f_{n+1}\}.\) From \autoref{auto-red-prop}, we know that any autoreduced set is finite. Thus this process must terminate after finitely many steps, and we have constructed \(A = \bigcup A_n\) as an autoreduced set of \(I.\)

Note that by construction, \(A = \{f_1, \ldots, f_n\}\) is an autoreduced set with \(\rk(f_i) < \rk(f_j)\) for all \(i <j.\) At each step, \(f_i\) is an element of smallest rank reduced with respect to \(A_{i-1}\) with \(s_{f_i} \notin I.\) Thus any distinct autoreduced set containing the same number of elements cannot be lower than \(A\) in the pre-order on autoreduced sets. Similarly, as the process has terminated, we have added as many elements as possible, and there is no autoreduced set lower than \(A\) in the pre-order. Therefore \(A\) is a characteristic set of \(I.\)

\begin{lemma}
    Let \(I\) be an ideal of \(R\{\bar{x}\}_{\D^*}\) with characteristic set \(\C.\) If \(c \in \C,\) then \(I_c \notin I.\)
\end{lemma}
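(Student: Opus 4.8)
The plan is to argue by contradiction from the minimality of \(\C\) in the pre-order \(\prec\) on \(\mathcal{B}_I\). Write \(\C = \{c_1, \ldots, c_k\}\) with \(\rk(c_1) < \cdots < \rk(c_k)\); by \autoref{auto-red-prop} the leaders are distinct, so in fact \(u_{c_1} < \cdots < u_{c_k}\). Suppose toward a contradiction that \(I_c \in I\) for some \(c = c_i \in \C\), and set \(u := u_{c_i}\), \(d := \deg(c_i) \geq 1\), and \(c' := c_i - I_{c_i}\, u^{d}\). Since \(c_i, I_{c_i} \in I\) we have \(c' \in I\), and \(\deg_u(c') < d\) gives \(\rk(c') < \rk(c_i)\). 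The aim is to manufacture from \(c'\) an element of \(\mathcal{B}_I\) lying strictly below \(\C\).

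The argument splits according to whether the leader \(u\) still occurs in \(c'\). First I would dispose of the degenerate case where \(u\) does \emph{not} occur in \(c'\): then \(c_i = I_{c_i}\,u^{d} + c'\) with \(c'\) free of \(u\), so the separant, being the formal derivative with respect to \(u\), is exactly \(s_{c_i} = d\,I_{c_i}\,u^{d-1}\). As \(I_{c_i} \in I\) and \(I\) is an ideal, this forces \(s_{c_i} \in I\), contradicting the defining property \(s_{c_i} \notin I\) of a characteristic set (see \autoref{char-def}). Note this case subsumes \(c' \in R\).

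In the remaining case \(u\) occurs in \(c'\), so \(u_{c'} = u\) (the variables of \(c'\) lie among those of \(c_i\), hence are all \(\leq u\)) and \(c' \notin R\). I would then take \(\C' := \{c_1, \ldots, c_{i-1}, c'\}\) and check that \(\C' \in \mathcal{B}_I\) with \(\C' \prec \C\). That \(\C'\) is autoreduced needs both directions of reduction: \(c'\) is reduced with respect to each \(c_j\) \((j<i)\) because it involves no variable, and to no higher degree, than \(c_i\), which is already reduced with respect to \(c_j\); conversely each \(c_j\) is reduced with respect to \(c'\) because all of its variables are \(\leq u_{c_j} < u = u_{c'}\), so it contains no transform of \(u_{c'}\) whatsoever. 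For the separant condition, linearity of the derivative in \(u\) gives \(s_{c'} = s_{c_i} - d\,I_{c_i}\,u^{d-1} \equiv s_{c_i} \pmod{I}\), whence \(s_{c'} \notin I\); combined with \(s_{c_j} \notin I\) for \(j < i\) this places \(\C'\) in \(\mathcal{B}_I\). Finally \(\C'\) agrees with \(\C\) in rank on its first \(i-1\) entries while \(\rk(c') < \rk(c_i)\) at position \(i\), so \(\C' \prec \C\) by the divergence clause of the pre-order, contradicting minimality.

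The step I expect to require the most care is the case split itself. Naively replacing \(c_i\) by \(c'\) everywhere breaks down exactly when the leader drops, i.e. when \(u\) occurs in \(c_i\) only to its top power \(d\): then \(c'\) need not slot into \(\C\) at position \(i\), and the lower \(c_j\) need not be reduced with respect to \(c'\). The observation that rescues the proof is that this leader-dropping situation cannot coexist with \(I_{c_i} \in I\), since there the separant equals \(d\,I_{c_i}\,u^{d-1}\), which already lies in \(I\) and is therefore forbidden by the characteristic-set hypothesis. Once this degenerate case is excluded, the leader of \(c'\) is forced to remain \(u\), the insertion point is precisely \(i\), and the autoreducedness and \(\prec\)-comparison become routine.
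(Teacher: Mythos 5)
Your proof is correct and follows the same core strategy as the paper's own argument: replace \(c\) by \(c' = c - I_c\,u_c^{\deg(c)}\), note \(\rk(c') < \rk(c)\), and contradict the minimality of \(\C\) in the pre-order. Your write-up is in fact more careful than the paper's one-paragraph proof, which neither addresses the degenerate case where \(u_c\) drops out of \(c'\) (your observation that this would force \(s_c = d\,I_c u_c^{d-1} \in I\), violating \autoref{char-def}, is exactly the right fix, and without it the claimed set \(\C'\) need not be autoreduced, nor even lie outside \(R\)) nor verifies the separant condition \(s_{c'} \notin I\) required for \(\C'\) to belong to \(\mathcal{B}_I\).
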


\begin{proof}
    Let \(c \in \C\) and \(d\) be the degree of \(u_c\) in \(c.\) The polynomial \(c'=c-I_i u_c^d\) must either be free of \(u_c\) or \(u_{c'} = u_c\) and \(u_c\) appears with degree strictly less than \(d.\) Thus \(\rk(c') < \rk(c).\) If \(I_i \in I,\) then \(c' \in I.\) Then the set \[\C' = \{c_i \mid \rk(c_i) < \rk(c)\} \cup \{c'\}\] is an autoreduced set for \(I\) and \(\C' \prec \C\) contradicting minimality of \(\C.\)
\end{proof}

\begin{lemma} \label{reduced_C}
Let \(I\) be an ideal of \(R\{\bar{x}\}_{\D^*}\) with characteristic set \[\C=\{c_1, \ldots , c_m\}.\] If \(f \in I\) is reduced with respect to \(\C\) and \(s_f \notin I,\) then \(f \in (I\cap R)\) where \((I \cap R)\) is the ideal of \(R\{\bar{x}\}_{\D^*}\) generated by \(I \cap R \trianglelefteq R.\)
\end{lemma}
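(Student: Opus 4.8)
The plan is to argue by contradiction using the minimality of the characteristic set \(\C\) in the pre-order on autoreduced sets. First I would dispose of the trivial case: if \(f \in R,\) then \(f \in I \cap R,\) and since \((I \cap R)\) denotes the ideal of \(R\{\bar{x}\}_{\D^*}\) generated by \(I \cap R,\) we get \(f \in (I \cap R)\) immediately. So it suffices to rule out \(f \notin R\); assume then that \(f \notin R,\) so that \(f\) has a well-defined leader \(u_f\) and separant \(s_f.\)

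The key construction is the set \(\C' := \{c \in \C \mid \rk(c) < \rk(f)\} \cup \{f\},\) obtained by inserting \(f\) into \(\C\) at its rank. I would first verify that \(\C'\) is autoreduced. The elements \(c_i \in \C\) with \(\rk(c_i) < \rk(f)\) are pairwise reduced since they lie in the autoreduced set \(\C,\) and \(f\) is reduced with respect to each of them by hypothesis. The substantive point is that each such \(c_i\) is reduced with respect to \(f.\) Here I would note that \(\rk(c_i) < \rk(f),\) combined with \(f\) being reduced with respect to \(c_i,\) forces \(u_{c_i} < u_f\): the alternative \(u_{c_i} = u_f\) would require \(\deg(f) < \deg(c_i)\) for \(f\) to be reduced with respect to \(c_i,\) contradicting \(\rk(c_i) < \rk(f).\) Since every transform \(\theta(u_f)\) satisfies \(\theta(u_f) \ge u_f\) by the ranking axioms (applying any \(\Delta\)-operator strictly raises rank), while every variable occurring in \(c_i\) has rank at most \(u_{c_i} < u_f,\) the polynomial \(c_i\) contains no transform of \(u_f\) whatsoever and is therefore vacuously reduced with respect to \(f.\) Moreover \(\C' \subseteq I,\) and every element of \(\C'\) has separant outside \(I\) (for the \(c_i\) by the definition of \(\mathcal{B}_I\) in \autoref{char-def}, for \(f\) by hypothesis), so \(\C' \in \mathcal{B}_I.\)

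It then remains to compare \(\C'\) with \(\C\) in the pre-order. The same leader-degree argument shows that no element of \(\C\) can have rank exactly \(\rk(f),\) so writing \(\C = \{c_1, \ldots, c_m\}\) with strictly increasing ranks (as in \autoref{auto-red-prop}) and letting \(j\) be the number of \(c_i\) with \(\rk(c_i) < \rk(f),\) we have \(\C' = \{c_1, \ldots, c_j, f\}\) with \(\rk(f) < \rk(c_{j+1})\) whenever \(j < m.\) If \(j < m,\) the first clause of the pre-order applies at position \(j+1\) (the ranks agree up to position \(j\) and \(\rk(f) < \rk(c_{j+1})\)); if \(j = m,\) then \(\C' = \C \cup \{f\}\) has strictly more elements with matching ranks throughout, so the second clause applies. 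In either case \(\C' \prec \C,\) contradicting the minimality of \(\C\) in \(\mathcal{B}_I.\) Hence \(f \notin R\) is impossible, and the lemma follows.

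I expect the autoreducedness check --- specifically showing that each lower-ranked \(c_i\) is reduced with respect to \(f\) --- to be the main obstacle, since that is where the ranking axioms and the careful bookkeeping between leaders, degrees, and transforms are needed; once \(\C'\) is known to lie in \(\mathcal{B}_I,\) the pre-order comparison is essentially formal. This mirrors the classical Ritt--Kolchin argument that a reduced member of an ideal must be trivial, with \(I \cap R\) playing the role of the constants.
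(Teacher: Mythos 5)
Your proof is correct and follows essentially the same route as the paper: insert \(f\) into \(\C\) at its rank position to form a smaller autoreduced set in \(\mathcal{B}_I\), contradicting minimality, with the same case split on whether \(f\) outranks all of \(\C\) or sits below some \(c_j\). You additionally spell out two points the paper leaves implicit --- the trivial case \(f \in R\) and the verification that each lower-ranked \(c_i\) is (vacuously) reduced with respect to \(f\) --- both of which are handled correctly.
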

\begin{proof}
Let \(f \in I\) be reduced with respect to \(\C\) with \(s_f \notin I.\) Towards a contradiction, suppose \(f \notin (I \cap R).\) Either \(\rk(c_i) < \rk(f)\) for every \(i,\) or for some \(1 \le j \le m,\) we have that \(\rk(f) < \rk(c_j).\) Note that \(\rk(f) \neq \rk(c_i)\) for each \(i,\) as \(f\) is reduced with respect to each \(c_i\) so they cannot have the same leading term and degree.

If \(\rk(c_i) < \rk(f)\) for every \(c_i \in \C,\) then the set \[\C'=\{c_1,\cdots , c_m , f\}\] is an autoreduced set for \(I.\) As it has the same initial \(m\) elements, but a larger cardinality, \(\C'\) is smaller than \(\C.\)
Otherwise, there is some smallest \(c_j \in \C\) such that \(\rk(f)< \rk(c_j).\) For \(j > 1,\) we set \[\C' = \{ c_1 , \cdots , c_{j-1} , f\}\] and for \(j=1,\) we set \(\C' = \{f\}.\) This is an autoreduced set for \(I\) and \(\C' \prec \C.\)
In either case, this contradicts minimality of \(\C\) and thus \(f\) must be contained in \(R\) if it is reduced with respect to \(\C.\)

\end{proof}

\begin{corollary}\label{reduced_corollary}
    Assume \(R\) is a \(\Q\)-algebra. Let \(I\) be an ideal of \(R\{\bar{x}\}_{\D^*}\) with characteristic set \(\C.\) If \(f \in I\) is reduced with respect to \(\C,\) then \(f \in (I \cap R).\)
\end{corollary}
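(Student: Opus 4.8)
The plan is to induct on $\rk(f)$ and reduce to \autoref{reduced_C}, whose only additional hypothesis over the corollary is that $s_f \notin I$. Two characteristic-zero ingredients drive the argument. First, since $R\{\bar{x}\}_{\D^*} = R[\bar{x}_{\D^*}]$ is an ordinary polynomial ring over $R$, the extended ideal $(I \cap R)$ is precisely the set of $\D^*$-polynomials all of whose coefficients in $R$ lie in $I \cap R$; in particular a polynomial lies in $(I \cap R)$ if and only if each of its $R$-coefficients does. Second, because $R$ is a $\Q$-algebra, the positive integers arising from formal differentiation are invertible, so the separant $s_f = \tfrac{\delta f}{\delta u_f}$ determines every coefficient of $f$ attached to a monomial involving $u_f$.

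The induction on $\rk(f)$ is well-founded because the ranking on $\bar{x}_{\D^*}$ is a well-order and degrees lie in $\N$. The base case is $f \in R$, where $f \in I \cap R \subseteq (I \cap R)$ at once. For the inductive step I assume $f \notin R$ and split on whether $s_f \in I$. If $s_f \notin I$, then \autoref{reduced_C} applies verbatim and gives $f \in (I \cap R)$, so it remains to treat the case $s_f \in I$.

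In this substantive case I first note that $s_f$ is still reduced with respect to $\C$, since differentiating by $u_f$ introduces no new variable and raises no degree, and that $\rk(s_f) < \rk(f)$ as recorded after the definition of the separant; the induction hypothesis therefore yields $s_f \in (I \cap R)$. Writing $f = \sum_{\mu} r_{\mu}\, \mu$ with $r_\mu \in R$ and $\mu$ ranging over monomials in $\bar{x}_{\D^*}$, the coefficient in $s_f$ of a monomial $u_f^{b}\nu$ (with $\nu$ free of $u_f$) equals $(b+1)\, r_{u_f^{b+1}\nu}$, and no other monomial of $f$ contributes to it. Since $s_f \in (I \cap R)$ forces all its coefficients into $I \cap R$ and each $b+1$ is invertible in $R$, every coefficient $r_\mu$ of $f$ whose monomial $\mu$ involves $u_f$ lies in $I \cap R$. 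Equivalently, writing $f = g_d u_f^d + \cdots + g_1 u_f + g_0$ with each $g_i$ free of $u_f$, we obtain $f - g_0 \in (I \cap R)$.

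Finally, from $(I \cap R) \subseteq I$ and $f \in I$ we get $g_0 = f - (f - g_0) \in I$. Now $g_0$ is free of $u_f$, so $\rk(g_0) < \rk(f)$, and it is reduced with respect to $\C$ because its variables and their degrees are inherited from $f$. The induction hypothesis applied to $g_0$ gives $g_0 \in (I \cap R)$, and hence $f = (f - g_0) + g_0 \in (I \cap R)$, closing the induction. I expect the main obstacle to be the careful identification of $(I \cap R)$ with the coefficient-wise condition, together with the characteristic-zero ``integration'' step that recovers the $u_f$-involving part of $f$ from $s_f$; the remaining claims about how reduction, rank, and truncation behave under differentiation are routine once this is in place.
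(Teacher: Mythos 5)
Your proof is correct and follows essentially the same route as the paper's: the paper argues by minimal counterexample rather than explicit induction on rank, but the key steps are identical — reduce to \autoref{reduced_C} when \(s_f \notin I\), otherwise deduce \(s_f \in (I\cap R)\) by induction, recover \(g_1,\ldots,g_d \in I\cap R\) from the invertibility of \(1,\ldots,d\) in the \(\Q\)-algebra \(R\), and finish by applying the induction hypothesis to \(g_0\). Your write-up is in fact slightly more careful on two points the paper leaves implicit, namely the coefficient-wise description of the extended ideal \((I\cap R)\) and the verification that \(g_0 \in I\).
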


\begin{proof}
    Assume \(f\) is a counterexample of minimal rank. By \autoref{reduced_C}, \(s_f \in I.\) Since \(s_f\) is reduced with respect to \(\C\) and \(\rk (s_f) < \rk (f),\) by minimality of \(\rk (f),\) we have \(s_f \in (I \cap R).\) Write \[f= g_0 + g_1u_f + \cdots + g_d u_f^d.\] Then \[s_f = g_1 + 2g_2 u_f + \cdots + d g_d u_f^{d-1}.\]
    Since \(s_f \in (I \cap R),\) we can write \(s_f = \alpha_1 h_1 + \cdots + \alpha_m h_m\) with \(\alpha_i \in I \cap R\) and \(h_i \in R\{\bar{x}\}_{\D^*}.\) Rearranging this as a polynomial in \(u_f,\) we get \(g_1, 2g_2, \ldots, dg_d \in (I \cap R).\) As \(R\) is a \(\Q\)-algebra, we get \(g_1,\ldots, g_d \in (I \cap R).\) Thus \(g_1 u_f + \cdots g_d u_f^d \in (I\cap R).\) Additionally, since \(g_0\) is reduced with respect to \(\C\) and \(\rk (g_0) < \rk(f),\) minimality of \(\rk (f)\) implies that \(g_0 \in (I \cap R).\) Hence \[f = g_0 + g_1 u_f + \cdots + g_d u_f^d \in (I \cap R),\] a contradiction. 
\end{proof}
    
\begin{remark}\label{char-issues}
    We now discuss some issues arising in positive characteristic. In positive characteristic, the above corollary does not hold in general. For instance, let \((K,\delta)\) be a differential field with one derivation. Take \(I\) to be the ideal of \(R\{x\}_{\D^*} = R\{\sigma^i \delta^i x\}\) generated by \(f(x) = x^p.\) Then the empty set is a characteristic set of \(I\) (since \(s_f = 0 \in I\)). Thus \(f\) is reduced with respect to \(\emptyset\) but \(f \notin (I \cap K) = (0).\)
    
    In \cite{cohn_1970}, Cohn presents an argument for the difference-differential basis theorem in arbitrary characteristic. To discuss issues arising there, we will briefly present some notions from Cohn's work. We are now working in the difference-differential polynomial ring in one variable \(x.\) Take an ideal \(I\) with characteristic set \(\C.\) For a polynomial \(c \in \C,\) let \(w_c = \psi(x)\) be the unique variable such that \(u_c\) is a \(\sigma\)-transform of \(w_c\) and \(\psi\) is a composition of derivations. Let \(V\) be the set of variables \(\theta(x)\) such that \(\theta\) is a composition of derivations and no \(v \in V\) is a proper derivation of any \(w_c\) for \(c \in \C.\) Cohn states without justification that if \(f \in I\) is reduced with respect to \(\C,\) then \(f \in (I \cap R)[V^*]\) where \(V^*\) is the set of \(\sigma\)-transforms of the set \(V.\) In positive characteristic, it is not clear to us why this holds and thus it is not clear how to adapt Cohn's argument in positive characteristic to our setup. As such, we restrict ourselves to characteristic zero and leave these issues to be addressed in future work.\footnote{In Cohn's notation, this statement is presented as ``Every member of \(P\) reduced with respect to \(F\) is in \(R_0[V^*]\).". It can be found in the proof of Theorem III (Section 5) on page 1232 of \cite{cohn_1970}.}
\end{remark}

\subsection{A \(\D^*\)-basis theorem}
We now prove our main result.

\medskip
For a subset \(\Sigma \subseteq R,\) \([\Sigma]_{\D},\) \(\langle \Sigma \rangle_{\D},\) and \(\{ \Sigma \}_{\D}\) denote the \(\D\)-ideal, the reflexive \(\D\)-ideal and the perfect \(\D\)-ideal generated by \(\Sigma,\) respectively. For an ideal \(I\) of a ring \(R\) and \(s \in R,\) we write \(I \colon s^{\infty} = \{r \in R \mid rs^n \in I \text{ for some } n \in \N\}.\) 

\begin{lemma}\label{ideal-form}
    Assume \(R\) is a \(\Q\)-algebra. Let \(P\) be a prime perfect \(\D\)-ideal of \(R\{\bar{x}\}_{\D^*}\) and \(\C\) be a characteristic set of \(P.\) Let \(H = \prod_{c\in\C} I_c s_c\) and \(B = P \cap R.\) Then \(H \notin P\) and \(P = \langle \C \cup B \rangle_{\D} \colon H^{\infty}.\)
\end{lemma}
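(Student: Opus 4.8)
The plan is to prove the two assertions separately, with the non-membership \(H \notin P\) and the inclusion \(\langle \C \cup B \rangle_{\D} \colon H^{\infty} \subseteq P\) being routine, and the reverse inclusion carrying all the weight. Throughout write \(J := \langle \C \cup B \rangle_{\D}\).

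For \(H \notin P\): since \(\C\) is a characteristic set, \autoref{char-def} gives \(s_c \notin P\) for every \(c \in \C\), while the preceding lemma (initials of a characteristic set do not lie in the ideal) gives \(I_c \notin P\). As \(P\) is prime and \(H = \prod_{c \in \C} I_c s_c\) is a product of factors none of which lie in \(P\), we conclude \(H \notin P\). For the easy inclusion, note that \(\C \subseteq P\) and \(B = P \cap R \subseteq P\), and \(P\) is a reflexive \(\D\)-ideal (being perfect), so by minimality \(J \subseteq P\). Hence if \(g H^{N} \in J \subseteq P\) for some \(N\), then primality of \(P\) together with \(H \notin P\) forces \(g \in P\); this is exactly \(J \colon H^{\infty} \subseteq P\).

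For the reverse inclusion \(P \subseteq J \colon H^{\infty}\), fix \(g \in P\) and apply the \(\D^*\)-reduction lemma (\autoref{div_alg}) with \(A = \C\): there exist \(H'\), a product of \(\sigma\)-transforms of the initials and separants of \(\C\), and \(g_0\) reduced with respect to \(\C\), satisfying \(H' g \equiv g_0 \pmod{[\C]_{\D}}\). Since \([\C]_{\D} \subseteq P\) and \(g \in P\), we get \(g_0 \in P\); as \(g_0\) is reduced with respect to \(\C\), \autoref{reduced_corollary} (this is where the hypothesis that \(R\) is a \(\Q\)-algebra is used) places \(g_0\) in the ideal generated by \(P \cap R = B\), which is contained in \(J\). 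As also \([\C]_{\D} \subseteq J\), we conclude \(H' g \in J\).

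The crux is to pass from \(H' g \in J\), in which \(H'\) carries \(\sigma\)-transforms, to \(H^{N} g \in J\), in which only the bare initials and separants appear. The key is a reflexive-absorption identity: if \(J\) is a reflexive \(\D\)-ideal and \(\sigma_i(a) b \in J\), then multiplying by \(a \sigma_i(b)\) and using that \(\sigma_i\) is an endomorphism yields \((ab)\sigma_i(ab) \in J\), whence \(ab \in J\) by reflexivity applied to the element \(ab\). Iterating along the length of a \(\sigma\)-transform gives \(\tau(a) b \in J \Rightarrow a b \in J\) for any \(\sigma\)-transform \(\tau\); applying this to each factor of \(H'\) in turn reduces \(H' g \in J\) to \(\bigl(\prod_j \pi_j\bigr) g \in J\), where each \(\pi_j \in \{I_c, s_c : c \in \C\}\). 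Since a product of finitely many such factors divides a power of \(H\), multiplying by a suitable polynomial gives \(H^{N} g \in J\), i.e. \(g \in J \colon H^{\infty}\). I expect this absorption step to be the main obstacle, as it is the only point where reflexivity of \(J\) (rather than merely its being a \(\D\)-ideal) is essential and where the interaction between the endomorphisms \(\sigma_i\) and the multiplicative structure must be handled with care.
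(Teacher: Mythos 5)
Your proposal is correct and follows essentially the same route as the paper: non-membership of \(H\) via primality, the easy inclusion via reflexivity of \(P\), the reduction lemma plus \autoref{reduced_corollary} to get \(H'g \in \langle \C \cup B\rangle_{\D}\), and then stripping the \(\sigma\)-transforms using reflexivity. Your absorption identity (\(\sigma_i(a)b \in J \Rightarrow (ab)\sigma_i(ab) \in J \Rightarrow ab \in J\)) is exactly the mechanism the paper leaves implicit in the phrase ``multiplying by the appropriate \(\sigma\)-transforms of \(f, I_c\) and \(s_c\),'' so you have in fact supplied more detail at the one point where the paper is terse.
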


\begin{proof}
    By \autoref{char-def}, \(s_c\) is not in \(P\) and by \autoref{reduced_C},  \(I_c\) is not in \(P.\) As \(P\) is prime, \(H \notin P.\) It is clear that \(\C \cup B\) is a subset of \(P.\) Using again that \(P\) is prime and \(H \notin P,\) we have \(\langle \C \cup B \rangle_{\D} \colon H^{\infty} \subseteq P.\) It remains to show that \(P \subseteq \langle \C \cup B \rangle_{\D} \colon H^{\infty}.\)
    
    Let \(f \in P.\) By \autoref{div_alg}, we can find \(f_0\) reduced with respect to \(\C\) such that \[\tilde{H} f \equiv f_0 \text{ mod} [\C ]_{\D}\] where \(\tilde{H}\) is a product of \(\sigma\)-transforms of \(I_c\) and \(s_c.\) As \(f_0\) is reduced with respect to \(\C,\) by \autoref{reduced_corollary}, \(f_0 \in (P \cap R) = (B).\) Thus we have that \[\tilde{H} f \in [\C \cup B]_{\D}.\]

    We now pass to the reflexive ideal generated by \(C \cup B.\) Multiplying \(\tilde{H} f\) by the appropriate \(\sigma\)-transforms of \(f, I_c\) and \(s_c\) and using the reflexiveness of \(\langle \C\cup B \rangle_{\D},\) we obtain \(H^mf \in \langle \C\cup B \rangle_{\D}\) for some \(m \in \N.\) Thus \(P = \langle \C \cup B \rangle_{\D} \colon H^{\infty}.\)
\end{proof}

\begin{remark}
    In the notation of the above lemma, if \(R\) is a field, then \(B = (0).\) Thus \(P = \langle \C \rangle_{\D} \colon H^{\infty}.\)
\end{remark}

\begin{theorem}[\(\D\)-Basis Theorem]\label{d-basis-thm}
    Let \((R,e)\) be a \(\D^*\)-ring with \(R\) a \(\Q\)-algebra and \(\bar{x} = (x_1,\ldots,x_n).\) If \(R\) has the ascending chain condition on perfect \(\D\)-ideals, then so does \(R\{\bar{x}\}_{\D^*}.\)
\end{theorem}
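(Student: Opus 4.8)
The plan is to deduce the theorem from \autoref{poisson-thm} applied to the perfect conservative system of perfect $\D$-ideals on $R\{\bar{x}\}_{\D^*}$, which exists by \autoref{rdcs}. Via the reformulation in \autoref{poisson-applies}, it suffices to verify the following: for every prime perfect $\D$-ideal $P$ of $R\{\bar{x}\}_{\D^*}$, there is a finite subset $\Sigma \subset P$ and an element $s \in R\{\bar{x}\}_{\D^*} \setminus P$ with $P = \{\Sigma\}_{\D} : s$. Everything therefore reduces to producing such a finite $\Sigma$ and such an $s$ for an arbitrary prime perfect $\D$-ideal.

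So fix a prime perfect $\D$-ideal $P$, and let $\C$ be a characteristic set of $P$; by \autoref{auto-red-prop} this is a finite set. Put $B = P \cap R$ and $H = \prod_{c \in \C} I_c s_c$. \autoref{ideal-form} then gives $H \notin P$ and $P = \langle \C \cup B \rangle_{\D} : H^{\infty}$. The set $\C$ already supplies a finite ``variable part'', but $B$ is in general infinite, and this is exactly where the hypothesis on $R$ enters: one checks routinely that $B = P \cap R$ is a perfect $\D$-ideal of $R$ (closure under the operators, radicality and reflexivity all restrict from $P$), so the ascending chain condition on perfect $\D$-ideals of $R$ makes $B$ finitely generated as a perfect $\D$-ideal, say $B = \{\Sigma_0\}_{\D}$ with $\Sigma_0 \subset R$ finite. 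I would then set $\Sigma := \C \cup \Sigma_0$, a finite subset of $P$, and take $s := H$.

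It remains to prove $P = \{\Sigma\}_{\D} : H$. Since $\{\Sigma\}_{\D}$ is a reflexive $\D$-ideal containing $\C$ and (after transporting $\Sigma_0$ to the big ring) containing $B$, it contains $\langle \C \cup B \rangle_{\D}$; hence $P = \langle \C \cup B \rangle_{\D} : H^{\infty} \subseteq \{\Sigma\}_{\D} : H^{\infty}$. Because $\{\Sigma\}_{\D}$ is radical, $\{\Sigma\}_{\D} : H^{\infty} = \{\Sigma\}_{\D} : H$ (if $rH^{n} \in \{\Sigma\}_{\D}$ then $(rH)^{n} = r^{n-1}(rH^{n}) \in \{\Sigma\}_{\D}$, whence $rH \in \{\Sigma\}_{\D}$), giving $P \subseteq \{\Sigma\}_{\D} : H$. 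Conversely, $\Sigma \subset P$ forces $\{\Sigma\}_{\D} \subseteq P$, and since $P$ is prime with $H \notin P$ we get $\{\Sigma\}_{\D} : H \subseteq P : H = P$. Thus $P = \{\Sigma\}_{\D} : H$ with $\Sigma$ finite and $H \notin P$, which is precisely the hypothesis of \autoref{poisson-thm}, and the theorem follows. The main obstacle is the bookkeeping in this last bridge: \autoref{ideal-form} delivers $P$ as a saturation of a \emph{reflexive} $\D$-ideal by the powers of $H$, whereas \autoref{poisson-thm} demands $P$ as a single colon quotient of a \emph{perfect} $\D$-ideal generated by a \emph{finite} set, so the genuine content is the passage $\langle\,\cdot\,\rangle_{\D} \rightsquigarrow \{\,\cdot\,\}_{\D}$, the reduction $:H^{\infty} \rightsquigarrow :H$ via radicality, and the finite generation of $B$ supplied by the chain condition on $R$.
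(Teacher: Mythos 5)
Your proposal is correct and follows essentially the same route as the paper: reduce to \autoref{poisson-thm} via \autoref{rdcs} and \autoref{poisson-applies}, apply \autoref{ideal-form} to a prime perfect $\D$-ideal $P$ with characteristic set $\C$, use the chain condition on $R$ to replace $B = P\cap R$ by a finite generating set, and pass from the reflexive saturation $\langle\,\cdot\,\rangle_{\D}:H^{\infty}$ to the single colon quotient $\{\,\cdot\,\}_{\D}:H$ by radicality. The only cosmetic difference is the order of the last two steps (the paper first shows $P=\{\C\cup B\}_{\D}:H$ and then replaces $B$ by a finite set), which changes nothing.
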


\begin{proof}
By \autoref{rdcs}, the collection of perfect \(\D\)-ideals of \(R\) form a perfect conservative system of ideals. By \autoref{poisson-thm} and \autoref{poisson-applies}, it is enough to show if \(P\) is a prime perfect \(\D\)-ideal, then there is a finite \(\Sigma \subset P\) and \(s \in R\{\bar{x}\}_{\D^*} \setminus P\) such that \(P = \{\Sigma\}_{\D} \colon s.\)

Let \(P\) be a prime perfect ideal with characteristic set \(\C.\) By \autoref{ideal-form}, \(P\) can be written as \(\langle \C \cup B \rangle_{\D} \colon H^{\infty}\) where \(B = P \cap R.\) Let \(f \in P= \langle \C \cup B \rangle_{\D} \colon H^{\infty}.\) Then \(H^n f \in \langle \C\cup B \rangle_{\D}\) for some \(n \in \N.\) Thus \(H^nf^n \in \langle \C\cup B \rangle_{\D}.\) Passing to the radical \(\D\)-ideal generated by \(\C \cup B,\) we have \(Hf \in \{\C \cup B\}_{\D}.\) Therefore we have that \(P \subseteq \{\C \cup B\}_{\D}\colon H.\) Consequently, \(P = \{\C\cup B\}_{\D}\colon H.\)

As we assume that \(R\) has the ascending chain condition on perfect \(\D\)-ideals, there exists a finite set \(\Phi\) such that \(\{\Phi\}_{\D} = B.\) Thus \(P = \{\C \cup \Phi \}_{\D} : H.\) As \(\C\) is a characteristic set, it is finite and so \(\C \cup \Phi\) is a finite set. As \(H \notin P,\) we are done.
\end{proof}

We now present some consequences of the above theorem.

\begin{corollary}\label{final-corollary}
    Let \(S\) be any finitely \(\D\)-generated \(\D^*\)-algebra over \((R,e).\) If \(R\) has the ascending chain condition on perfect \(\D\)-ideals, then \(S\) has the ascending chain condition on perfect \(\D\)-ideals.
\end{corollary}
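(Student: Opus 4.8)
The plan is to reduce the claim to the main theorem (\autoref{d-basis-thm}) by realising $S$ as a quotient of a $\D^*$-polynomial ring in finitely many variables. Since $S$ is finitely $\D$-generated over $(R,e)$, say by $a_1,\ldots,a_n$, \autoref{univ-prop} furnishes a surjective $(R,e)$-algebra homomorphism $\phi \colon R\{x_1,\ldots,x_n\}_{\D^*} \to S$ sending $x_i \mapsto a_i$. As $R$ is a $\Q$-algebra, so is the polynomial ring $R\{\bar{x}\}_{\D^*}$, and hence \autoref{d-basis-thm} applies: $R\{\bar{x}\}_{\D^*}$ has the ascending chain condition on perfect $\D$-ideals.

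Next I would establish the key transfer lemma: if $\phi$ is a surjective $\D$-homomorphism and $J$ is a perfect $\D$-ideal of $S,$ then $\phi^{-1}(J)$ is a perfect $\D$-ideal of $R\{\bar{x}\}_{\D^*}.$ That $\phi^{-1}(J)$ is a radical ideal is immediate, since $a^n \in \phi^{-1}(J)$ gives $\phi(a)^n \in J$ and hence $\phi(a) \in J.$ It is a $\D$-ideal because $\phi$ commutes with each coordinate operator (both each $\sigma_i$ and each $\delta_{i,j}$), so $a \in \phi^{-1}(J)$ forces $\delta_{i,j}(\phi(a)) = \phi(\delta_{i,j}(a)) \in J.$ For perfectness I would invoke the characterisation in \autoref{perfect-form-lemma}: if $\tau_1(a)^{n_1}\cdots\tau_r(a)^{n_r} \in \phi^{-1}(J)$ with the $\tau_j$ compositions of the associated endomorphisms $\sigma_i,$ then applying $\phi$ and using that $\phi$ intertwines the $\sigma_i$ yields $\tau_1(\phi(a))^{n_1}\cdots\tau_r(\phi(a))^{n_r} \in J,$ whence $\phi(a) \in J$ and so $a \in \phi^{-1}(J).$

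Finally I would transport an ascending chain. Given a chain $J_1 \subseteq J_2 \subseteq \cdots$ of perfect $\D$-ideals of $S,$ the preimages $\phi^{-1}(J_1) \subseteq \phi^{-1}(J_2) \subseteq \cdots$ form an ascending chain of perfect $\D$-ideals of $R\{\bar{x}\}_{\D^*},$ which stabilises by the first step. Since $\phi$ is surjective we have $\phi(\phi^{-1}(J_k)) = J_k$ for every $k,$ so the original chain stabilises as well, and $S$ has the ascending chain condition on perfect $\D$-ideals.

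The bulk of the argument is routine; the only point requiring genuine care is the transfer lemma, specifically that the preimage inherits the perfectness condition. This is precisely where \autoref{perfect-form-lemma} is essential, as it recasts perfectness as a condition phrased purely in terms of the associated endomorphisms $\sigma_i$ — the operators that a $\D$-homomorphism is guaranteed to intertwine — rather than in terms of the full operator set $\Delta.$
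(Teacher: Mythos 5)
Your proof is correct and follows essentially the same route as the paper: realise \(S\) as a quotient of \(R\{\bar{x}\}_{\D^*}\) via \autoref{univ-prop}, apply \autoref{d-basis-thm}, and pull back chains of perfect \(\D\)-ideals along the surjection. The paper merely ``observes'' the transfer step that you verify in detail; your verification (radicality, closure under the operators, and perfectness via \autoref{perfect-form-lemma}) is sound.
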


\begin{proof}
    Denote the generators of \(S\) by \(\bar{a} = (a_1,\ldots,a_n).\) By \autoref{univ-prop}, there is a surjective \((R,e)\)-algebra homomorphism \(\phi \colon R\{\bar{x}\}_{\D^*} \to S\) that maps \(x_i\) to \(a_i\) for each \(i.\) We observe that every preimage of a perfect \(\D\)-ideal of \(S\) is a perfect \(\D\)-ideal of \(R\{\bar{x}\}_{\D^*}.\) Thus any chain of perfect \(\D\)-ideals of \(S\) corresponds to a chain of perfect \(\D\)-ideals of \(R\{\bar{x}\}_{\D^*}.\) As \(R\) has the ascending chain condition on perfect \(\D\)-ideals, by \autoref{d-basis-thm}, so does \(R\{\bar{x}\}_{\D^*}\) and thus this chain of perfect \(\D\)-ideals stabilises. Then the chain of perfect \(\D\)-ideals of \(S\) must also stabilise.
\end{proof}

\begin{remark}\label{consequence-differential}
    We observe how to recover the classical cases of the differential basis theorem and the difference-differential basis theorem in characteristic zero.
    \begin{itemize}
        \item Let \(\D = K[\epsilon]/(\epsilon)^2\) with the usual \(K\)-algebra structure. As in \autoref{Degs}(a), we denote the operators associated to the basis \(\{1,\epsilon\}\) by \(\sigma\) and \(\delta,\) respectively. Let \((R,e)\) be a \(\D^*\)-ring such that \(\sigma\) is the identity on \(R;\) i.e. \((R,\delta)\) is a differential ring. In \autoref{diff-poly}, we note that there is a surjective \((R,e)\)-algebra homomorphism from \(R\{x\}_{\D^*}\) to \(R\{x\}_{\delta}\) that maps \(\sigma \delta^i x\) to \(\delta^i x.\) In \autoref{differential-ideals}, we observed that perfect \(\D\)-ideals of \((R,e)\) correspond to the radical differential ideals of \((R,\delta).\) By \autoref{final-corollary}, the finitely \(\D\)-generated \(\D^*\)-algebra \(R\{x\}_{\delta}\) has the ascending chain condition on perfect \(\D\)-ideals; i.e. \(R\{x\}_{\delta}\) has the ascending chain condition on radical differential ideals.
        \item Let \(\D = K[\nu]/(\nu)^2 \times K\) with the natural \(K\)-algebra structure. As in \autoref{Degs}(c), denote by \(\sigma_0 = \delta_0,\) \(\delta = \delta_1\) and \(\sigma = \delta_2\) the operators associated to the basis \(\{\epsilon_0,\epsilon_1,\epsilon_2\}\) where \(\epsilon_0 = (1,0),\) \(\epsilon_1 = (\nu,0)\) and \(\epsilon_2 = (0,1).\) Let \((R,e)\) be a \(\D^*\)-ring such that \(\sigma_0\) is the identity on \(R;\) in other words, the structure of \((R,e)\) is that of a difference-differential ring with endomorphism \(\sigma\) and derivation \(\delta\) where the operators commute. Similarly to the differential case, there is a surjective homomorphism from \(R\{x\}_{\D^*} \to R\{x\}_{\sigma,\delta}\) that maps \(d^{(\theta_0,\theta_1,\theta_2)}x\) to \(d^{(\theta_1,\theta_2)}x\) if \(\theta_0 > 0.\) As noted in \autoref{perfect-ideal-corr}, the perfect \(\D\)-ideals of a difference ring correspond to the perfect difference ideals. Thus perfect \(\D\)-ideals of a difference-differential ring are perfect difference-differential ideals. By \autoref{final-corollary}, \(R\{x\}_{\sigma, \delta}\) has the ascending chain condition on perfect \(\D\)-ideals; i.e. \(R\{x\}_{\sigma, \delta}\) has the ascending chain condition on perfect difference-differential ideals.
    \end{itemize}
\end{remark}

We note that in addition to recovering previously established results, we can also use the \(\D^*\)-basis theorem to establish new cases not previously seen in the literature. Let \(\D = K[\epsilon]/(\epsilon)^{n+1}\) with the usual \(K\)-algebra structure. Denote by \(\delta_0, \delta_1,\ldots,\delta_n\) the operators associated to the basis \(\{1,\epsilon,\epsilon^2,\ldots,\epsilon^n\}.\) Then \((R,e)\) is a \(\D\)-ring if and only if \(\delta_0\) is a \(R\)-endomorphism and for all \(r,s \in R,\) \(\delta_i(rs) = \sum_{j+k=i}\delta_j(r)\delta_k(s).\) In the case where \(\delta_0 = \id_R,\) \((\delta_1,\ldots,\delta_n)\) yields a truncated Hasse-Schmidt derivation whose components commute (see \autoref{Degs}(d)). As this is a \(\D^*\)-ring, we can use \autoref{d-basis-thm} to establish a basis theorem for rings with commuting truncated Hasse-Schmidt derivations.

\bigskip


\bibliographystyle{plain}
\bibliography{references}

\end{document}